\definecolor{gr}{rgb}   {0.,   0.69,   0.23 }
\definecolor{bl}{rgb}   {0.,   0.5,   1. }
\definecolor{mg}{rgb}   {0.85,  0.,    0.85}
\definecolor{yl}{rgb}   {0.8,  0.7,   0.}
\definecolor{or}{rgb}  {0.7,0.2,0.2}
\newtheorem{theorem}{Theorem} [section]
\newtheorem{lemma}[theorem]{Lemma}
\newtheorem{proposition}[theorem]{Proposition}
\newtheorem{remark}[theorem]{Remark}
\newtheorem{definition}[theorem]{Definition}
\newtheorem*{ack}{Acknowledgments}
\DeclareMathOperator*{\intt}{\int}
\DeclareMathOperator*{\supp}{supp}
\newcommand{\noi}{\noindent}
\newcommand{\Z}{\mathbb{Z}}
\newcommand{\R}{\mathbb{R}}
\newcommand{\C}{\mathbb{C}}
\newcommand{\T}{\mathbb{T}}
\let\Re=\undefined\DeclareMathOperator*{\Re}{Re}
\let\Im=\undefined\DeclareMathOperator*{\Im}{Im}
\let\P= \undefined
\newcommand{\P}{\mathbf{P}}
\newcommand{\E}{\mathbb{E}}
\renewcommand{\L}{\mathcal{L}}
\newcommand{\N}{\mathcal{N}}
\newcommand{\NB}{\mathbb{N}}
\newcommand{\FL}{\mathcal{F}L} 
\renewcommand{\S}{\mathcal{S}}
\newcommand{\F}{\mathcal{F}}
\newcommand{\al}{\alpha}
\newcommand{\be}{\beta}
\newcommand{\dl}{\delta}
\newcommand{\Dl}{\Delta}
\newcommand{\eps}{\varepsilon}
\newcommand{\g}{\gamma}
\newcommand{\G}{\Gamma}
\newcommand{\ld}{\lambda}
\newcommand{\s}{\sigma}
\newcommand{\Si}{\Sigma}
\newcommand{\ft}{\widehat}
\newcommand{\wt}{\widetilde}
\newcommand{\cj}{\overline}
\newcommand{\dx}{\partial_x}
\newcommand{\dt}{\partial_t}
\newcommand{\embeds}{\hookrightarrow}
\newcommand{\ta}{\theta}
\renewcommand{\l}{\ell}
\renewcommand{\o}{\omega}
\renewcommand{\O}{\Omega}
\newcommand{\les}{\lesssim}
\newcommand{\jb}[1]
{\langle #1 \rangle}
\newcommand{\ind}{\mathbf 1}
\numberwithin{equation}{section}
\numberwithin{theorem}{section}
\tikzset{
	dot/.style={circle,fill=black,draw=black,inner sep=0pt,minimum size=0.5mm},
	>=stealth,
	}
\tikzset{
	dot2/.style={circle,fill=black,draw=black,inner sep=0pt,minimum size=0.2mm},
	>=stealth,
	}
\tikzset{
	ddot/.style={circle,fill=white,draw=black,inner sep=0pt,minimum size=0.8mm},
	>=stealth,
	}
\tikzset{decision/.style={ 
        draw,
        diamond,
        aspect=1.5
    }}
\tikzset{dia2/.style
={diamond,fill=white,draw=black,inner sep=0pt,minimum size=1mm},
	>=stealth,
	}
\tikzset{dia/.style
={star,fill=black,draw=black,inner sep=0pt,minimum size=1mm},
	>=stealth,
	}
\tikzset{dia/.style
={diamond,fill=black,draw=black,inner sep=0pt,minimum size=1.3mm},
	>=stealth,
	}
\def\DeclareSymbol#1#2#3{\expandafter\gdef\csname MH@symb@#1\endcsname{\tikz[baseline=#2,scale=0.15]{#3}}}
\def\<#1>{\csname MH@symb@#1\endcsname}
\begin{document}
\baselineskip = 14pt

\title[stochastic NLS with almost space-time white noise]
{Stochastic nonlinear Schr\"{o}dinger equation with almost space-time white noise}

\author[J.~Forlano, T.~Oh, and Y.~Wang]
{Justin Forlano, Tadahiro Oh, and Yuzhao Wang}

\address{
 Justin Forlano\\ Maxwell Institute for Mathematical Sciences
 and 
 Department of Mathematics\\
 Heriot-Watt University\\
 Edinburgh\\ 
 EH14 4AS\\
  United Kingdom}

\email{j.forlano@hw.ac.uk}

\address{
Tadahiro Oh\\
School of Mathematics\\
The University of Edinburgh\\
and The Maxwell Institute for the Mathematical Sciences\\
James Clerk Maxwell Building\\
The King's Buildings\\
 Peter Guthrie Tait Road\\
Edinburgh\\ 
EH9 3FD\\United Kingdom} 

\email{hiro.oh@ed.ac.uk}

\address{
Yuzhao Wang\\
School of Mathematics, 
University of Birmingham, 
Watson Building, 
Edgbaston, 
Birmingham\\
B15 2TT, 
United Kingdom}

\email{y.wang.14@bham.ac.uk}

\subjclass[2010]{35Q55, 60H30}

\keywords{stochastic nonlinear Schr\"odinger equation; well-posedness; white noise; Fourier-Lebesgue spaces}

\begin{abstract}

We study the  stochastic cubic nonlinear Schr\"odinger equation (SNLS)
with an additive noise on the one-dimensional torus.
In particular, we prove local well-posedness of the (renormalized) SNLS
when the noise is  almost space-time white noise.
We also discuss a notion of criticality in this stochastic context,
comparing the situation  
with the stochastic cubic heat equation (also known as the stochastic quantization equation).

\end{abstract}



%
\maketitle
%


%
%
%
%

\section{Introduction}
\label{SEC:intro}
\subsection{Stochastic nonlinear Schr\"odinger equation}

We consider the Cauchy problem of the following 
stochastic cubic nonlinear Schr\"odinger equation (SNLS) with an additive  noise
on  the one-dimensional torus $\T = \R/\Z$:
\begin{equation}
\begin{cases}\label{SNLS0}
i \dt u -   \dx^2 u + |u|^{2}u =  \phi \xi \\
u|_{t = 0} = u_0, 
\end{cases}
\qquad ( t, x) \in \R_+ \times \T, 
\end{equation}

\noi
where $\xi(t, x)$ denotes  a (Gaussian) space-time white noise\footnote{In view of the time reversibility of the deterministic nonlinear Schr\"odinger equation, 
one can also consider \eqref{SNLS0} on $ \R\times \T$
by extending 
the white noise $\xi$ onto  $\R\times \T$.
For simplicity, however, we only consider positive times in the following.
} on $\R_+ \times \T$ 
and $\phi$ is a bounded  linear operator on $L^2(\T)$.
 SNLS \eqref{SNLS0}
has a wide range of applications, ranging from nonlinear optics and plasma physics to solid state physics and quantum statistics
\cite{FKLT, Agrawal, YousefiK}.
In the context of nonlinear fiber optics \cite{NM, Agrawal}, 
the nonlinear Schr\"odinger equation\footnote{Namely, $\phi = 0$ in \eqref{SNLS0}.  See \eqref{NLS0} below.} 
 (NLS), when derived from the Maxwell equations, 
describes transmission of a signal along a fiber line,
where the roles of the variables $t$ and $x$ are switched
from the ``standard'' interpretation, 
namely, 
 in this particular application for nonlinear fiber optics, 
$t$ denotes the (rescaled) propagation distance and $x$ denotes the (rescaled and translated) time.\footnote{At least locally in time.}
See \cite{NM, Agrawal} for further details.
In the following, however, we stick to the standard convention,
namely, 
we always  refer to $x$ as the spatial variable in $\T$ and
$t$ as the temporal variable in $\R_+$
in the remaining part of this paper.

When $\phi$ is the identity operator, 
  the stochastic forcing in \eqref{SNLS0}  reduces to the space-time white noise $\xi$.
The Cauchy problem \eqref{SNLS0} in this case 
is of particular interest
in terms of applications 
\cite{FKLT, FKLMT, YousefiK}
as well as its analytical difficulty
since the problem is then {\it critical}.
See Subsection \ref{SUBSEC:critical} for a further discussion.

We say that $u$ is a solution to \eqref{SNLS0} if it satisfies the following mild formulation
(= Duhamel formulation):
\begin{align}
u(t) = S(t) u_0 + i \int_0^t S(t - t') |u|^2 u (t') dt' - i \int_0^t S(t - t') \phi \xi(dt'), 
\label{SNLS1}
\end{align}

\noi
where $S(t) = e^{- i t \dx^2}$ denotes the linear Schr\"odinger propagator.
The last term on the right-hand side of \eqref{SNLS1}
is the so-called stochastic convolution, representing the effect of the random forcing.
In the following, we set 
\begin{align}
\Psi(t) :=   \int_0^t S(t - t') \phi \xi(dt').
\label{stoconv1}
\end{align}

\noi
If  $\phi  \in HS(L^2; H^s)$, namely, it
is a Hilbert-Schmidt
operator from $L^2(\T)$ to $H^s(\T)$, 
then a standard argument \cite{DZ} shows that 
$\Psi \in C(\R_+; H^s(\T))$ almost surely.
When $\phi = \text{Id}$, namely when the noise is given by the space-time white noise $\xi$, we have 
$\Psi \in C(\R_+; H^s(\T))$ almost surely
if and only if $s < -\frac 12$.
This roughness (in space) of the stochastic convolution is the source
of difficulty in studying SNLS \eqref{SNLS0} with the space-time white noise.

Given $\phi \in HS(L^2; H^s)$ for some $s > \frac 12$, 
local well-posedness of \eqref{SNLS0} in $H^s(\T)$ 
easily follows from 
the algebra property of $H^s(\T)$
and the unitarity of the linear Schr\"odinger propagator $S(t)$ on $H^s(\T)$.
For lower regularities, however, 
one needs to employ 
the Fourier restriction norm method due to Bourgain \cite{BO1}.
In particular, it is shown in \cite{CM}
that \eqref{SNLS0} is locally\footnote{A standard application of Ito's lemma
combined with the conservation of the $L^2$-norm for the deterministic NLS \eqref{NLS0}
yields  an a priori bound
on the $L^2$-norm of a solution and thus global well-posedness of~\eqref{SNLS0}
in $L^2(\T)$.  See \cite{CM} for details.}
 well-posed in $L^2(\T)$,
provided that  $\phi  \in HS(L^2; L^2)$.
The argument in \cite{CM} is based on (a slight modification of)
the $L^2$-local theory by Bourgain \cite{BO1}
and controlling the stochastic convolution in the relevant $X^{s, b}$-norm
(see Lemma  \ref{LEM:stoconv} below).
See \cite{DDT04} for a related argument in the context of the stochastic KdV equation.
We also mention
the well-posedness results \cite{DD, OPW} of SNLS~\eqref{SNLS0} on 
 the Euclidean space $\R^d$,
 where the Strichartz estimates and the dispersive estimate play an important role.

Our main goal in this paper is to study \eqref{SNLS0} 
when $\phi$ is almost the identity operator.
Given $\al \in \R$, 
let $\phi$ be the Bessel potential of order $\al$ given by
\begin{align}
\phi = \jb{\dx}^{-\al} : = (1 - \dx^2)^{-\frac\al2}.
\label{phi1}
\end{align}

\noi
Namely, the operator $\phi$ in \eqref{phi1} is the Fourier multiplier operator
with the multiplier given by $\jb{n}^{-\al}$:
\[ \ft{\phi f}(n)  =  \jb{n}^{-\al}\ft f(n)\]

\noi
for  $n \in \Z$, 
where $\jb{\,\cdot\,} = (1 + |\cdot|^2)^\frac{1}{2}$.
Then, 
we prove that (a renormalized version of) SNLS \eqref{SNLS0} is locally well-posed, 
provided that $\al > 0$.
See Theorem \ref{THM:1} below
for a precise statement.
Note that
our main result (Theorem~\ref{THM:1})
handles the case of {\it almost} space-time white noise forcing
since the operator $\phi$ in \eqref{phi1}
reduces to the identity operator
  when $\al= 0$.

Let $\phi$ be as in \eqref{phi1}.
Then, it is easy to see that $\phi \in HS(L^2; H^s)$ if and only if 
\begin{align*}
s <  \al - \frac 12.
\end{align*}

\noi
In particular, when $\al > \frac 12$, 
the $L^2$ well-posedness theory in \cite{CM} is readily 
applicable and we conclude that \eqref{SNLS0} is globally well-posed in $L^2(\T)$
in this case.
When $\al \leq \frac{1}{2}$, however, 
 the stochastic convolution lies almost surely outside $L^2(\T)$
(for fixed $t \ne 0$),
which causes a serious issue in studying \eqref{SNLS0} with rough noises.

Before proceeding further, let us first discuss
the situation for 
 the  (deterministic) cubic nonlinear Schr\"odinger equation:
\begin{equation}
i \dt u -   \dx^2 u + |u|^{2}u =  0.
\label{NLS0}
\end{equation}

\noi
By introducing the Fourier restriction norm method, 
Bourgain \cite{BO1} proved that \eqref{NLS0} is locally well-posed
in $L^2(\T)$, which was immediately extended to global well-posedness thanks to 
the conservation of the $L^2$-norm.
On the other hand, 
it is known that \eqref{NLS0} is ill-posed in negative Sobolev spaces \cite{CCT, MOLI, GuoOh}.
In order to overcome this issue, 
the following renormalized NLS:
\begin{equation}
\textstyle i \dt u -   \dx^2 u +  \big(|u|^{2} -2 \int_\T |u|^2 dx\big)u  = 0
\label{NLS0a}
\end{equation}

\noi
has been proposed as an alternative model to \eqref{NLS0} outside $L^2(\T)$
\cite{C1, GH, CO, OS, GuoOh}.
We point out that \eqref{NLS0} and \eqref{NLS0a} are equivalent in $L^2(\T)$
in the sense that 
 the following invertible gauge transformation:
\begin{equation}
u(t) \longmapsto
\mathcal{G}(u)(t) : = e^{-  2 i t \int |u|^2dx} u(t),
\label{gauge}
\end{equation}

\noi
allows us to 
freely convert solutions to \eqref{NLS0} to those to \eqref{NLS0a},
provided that they belong to  $ C(\R; L^2(\T))$.
The renormalized NLS \eqref{NLS0a} first appeared 
in the work of Bourgain~\cite{BO96}
in studying  the invariant Gibbs measure for the defocusing cubic NLS on $\T^2$.
In \cite{BO96}, it was introduced as
 a model equivalent to the Hamiltonian dynamics
 corresponding to the Wick ordered Hamiltonian arising 
 in Euclidean quantum field theory. 
See \cite{OTh} for a further discussion on the Wick renormalization
in the context of NLS on $\T^2$.
For this reason, the equation~\eqref{NLS0a}
is often referred to as the Wick ordered NLS.
The gauge transformation \eqref{gauge}
 removes a certain singular component from the cubic nonlinearity in \eqref{NLS0}; 
 see \eqref{nonres} and \eqref{res} below.
 As a result, 
the Wick ordered NLS \eqref{NLS0a}  
 behaves better than  the cubic NLS \eqref{NLS0} outside $L^2(\T)$,
 while they are equivalent in $L^2(\T)$.
In particular, we proposed in \cite{OS} that 
the Wick ordered  NLS \eqref{NLS0a}
is the right model to study outside $L^2(\T)$.

In an analogous manner, 
we propose to study the following renormalized 
SNLS: 
  \begin{equation}
\begin{cases}\label{SNLS2}
i \dt u -   \dx^2 u + (|u|^2- 2\int_\T |u|^2 dx\big)u   =  \phi \xi \\
u|_{t = 0} = u_0, 
\end{cases}
\end{equation}

\noi
for rough noises: $\phi \notin HS(L^2; L^2)$.
For simplicity, we refer to \eqref{SNLS2} as
the Wick ordered SNLS in the following.
 Our main goal is to establish local well-posedness of \eqref{SNLS2}
for  $\phi$ given by \eqref{phi1} with $\al >0$
arbitrarily close to $0$.
For this purpose, we now go over the known results on 
 the Wick ordered NLS \eqref{NLS0a} outside $L^2(\T)$.
It was observed in \cite{CO} that 
the Wick ordered NLS \eqref{NLS0a} is mildly ill-posed
in negative Sobolev spaces in the sense
of the  failure of local uniform continuity of the solution map.
This in particular implies that 
we can not apply a contraction argument to construct solutions
to \eqref{NLS0a} in negative Sobolev spaces.
In~\cite{GuoOh}, 
the second author (with Z.\,Guo)
employed a more robust energy method (in the form of the short-time Fourier restriction norm method)
and
proved local\footnote{This local existence result in \cite{GuoOh} can be extended to 
global existence.  See  \cite{OW2, KVZ}.} existence of solutions (without uniqueness) to the Wick ordered NLS \eqref{NLS0a}
in $H^s(\T)$, $-\frac 18 < s < 0$.
This result leaves a substantial gap to the desired regularity $s \approx - \frac{1}{2}$, 
corresponding to $\al \approx  0$. 
More importantly, the question of uniqueness 
for the Wick ordered NLS \eqref{NLS0a} in negative Sobolev spaces still remains as  a very challenging open question in the field
of nonlinear dispersive PDEs.
Hence, the approach in \cite{GuoOh} does not seem to be suitable
for studying the Wick ordered SNLS \eqref{SNLS2}.

In \cite{CO}, 
the second author (with Colliander) studied the Wick ordered NLS \eqref{NLS0a} with 
random initial data of the form:\footnote{In the following, we may drop the harmless factor of $2\pi$ when it plays no important role.}
\begin{align}
u_0(x; \o) = u_0^\o(x) = \sum_{n \in \Z} \frac{g_n(\o)}{\jb{n}^\al}e^{2\pi i nx},
\label{gauss1}
\end{align}

\noi
where $\{g_n\}_{n \in \Z}$ is a sequence of independent standard complex-valued Gaussian random variables.
Denoting by $\eta$ the spatial white noise on $\T$, 
we have $u_0^\o = \jb{\dx}^{-\al} \eta$, 
namely, for $\al > 0$,  the random initial data $u_0^\o$ corresponds to a smoothed spatial  white noise.
By exploiting a gain of space-time integrability of the random linear solution $S(t) u_0^\o$, 
we proved that the Wick ordered NLS \eqref{NLS0a} is locally well-posed
almost surely with respect to the random initial data \eqref{gauss1}, provided that $\al > \frac 16$.
By slightly modifying the argument in \cite{CO}, 
we can show that the Wick ordered SNLS \eqref{SNLS2}
is locally well-posed with (deterministic) initial data  in $L^2(\T)$, 
provided that $\phi = \jb{\dx}^{-\al}$ for $\al > \frac 16$.
The main idea would be to apply  the so-called Da Prato-Debussche trick \cite{DPD}, 
namely, write $u = v + \Psi$ with $\Psi$ as in \eqref{stoconv1}
and study the fixed point problem for the residual term $v := u - \Psi$.
In order to lower the regularity of the noise below $\al = \frac 16$, however, 
we would need to employ higher order expansions
\cite{BOP3, OTzW}.
In particular, since our main goal is to  handle an almost white noise (i.e.~arbitrarily small $\al >0$), 
we would need to (at least) consider higher order expansions of arbitrarily high orders
(see \cite{OTzW}), 
which seems to be out of reach at this point.

In order to overcome this difficulty, we leave the realm of the $L^2$-based Sobolev spaces.
More precisely, 
we study the Wick ordered SNLS \eqref{SNLS2}
in the Fourier-Lebesgue spaces $\F L^{s, p}(\T)$
in the following.
Here,  the Fourier-Lebesgue space $\F L^{s, p}(\T)$ is defined by the norm:
\begin{align}
\|f\|_{\FL^{s,p}(\T)}:=\|\jb{n}^{s}\ft f(n)\|_{\l^{p}_{n}(\Z)}.
\label{FL1}
\end{align}

\noi
When $s = 0$, we simply set $\F L^{p}(\T) = \F L^{0, p}(\T)$.
Recall the following  embedding:
$\F L^{p_1}(\T) \subset L^2(\T) \subset \F L^{p_2}(\T)$ for $p_1 \leq 2 \leq p_2$.
%
Let us first go over the known results for the Wick ordered NLS  \eqref{NLS0a}
in the Fourier-Lebesgue spaces.
In~\cite{C1}, by a power series expansion, 
Christ constructed a solution to \eqref{NLS0a} (without uniqueness)
with initial data in 
 $\F L^p(\T)$, $1\leq p < \infty$.
In \cite{GH}, Gr\"unrock-Herr adapted the Fourier restriction norm method
to the Fourier-Lebesgue space setting (see Section \ref{SEC:space}) and proved local well-posedness
of~\eqref{NLS0a} in $\F L^p(\T)$ for $1\leq p < \infty$
by a standard contraction argument.
We also mention the work~\cite{ORW}
on the construction of solutions to
~\eqref{NLS0a}  in $\F L^p(\T)$, $1\leq p < \infty$, 
 based on a normal form method.
These  results allow us to handle almost spatial white noise,
i.e.~$u_0^\o$ in~\eqref{gauss1} with arbitrarily small $\al > 0$,
suggesting
that the Fourier-Lebesgue setting is an appropriate framework 
for studying the Wick ordered SNLS \eqref{SNLS2}.

In order to study the Wick ordered SNLS \eqref{SNLS2} in the Fourier-Lebesgue spaces, 
we first need to extend the notion of Hilbert-Schmidt operators
to the Banach space setting.
Given $s \in \R$ and $1\leq p < \infty$, we say that 
$\phi$ is a $\g$-radonifying operator
from $L^2(\T)$ to $\FL^{s, p}(\T)$
if the $\g(L^{2}(\T); \FL^{s,p}(\T))$-norm defined by 
\begin{equation}
\|\phi\|_{\g(L^{2}; \, \FL^{s,p})}
: =\bigg\| \Big( \sum_{k\in \Z} |\jb{n}^{s}\ft { \phi(e_k)}(n)|^{2} \Big)^{\frac{1}{2}}  \bigg\|_{\l^{p}_{n}(\Z)} 
\label{G1}
\end{equation}

\noi
is finite, 
where $e_k (x) = e^{2\pi i k x}$, $k \in \Z$.
We denote the collection of $\g$-radonifying operators
by $\g(L^{2}(\T); \FL^{s,p}(\T))$.
Note that when $p = 2$, the norm in \eqref{G1} reduces
to the standard Hilbert-Schmidt $HS(L^2;H^s)$-norm.
For readers' convenience, 
we present basic definitions and properties
of  $\g$-radonifying operators 
in  Appendix \ref{SEC:gamma}.

We now state our main result of this paper.

\begin{theorem}\label{THM:1}
Let $s > 0$ and $1< p < \infty$.
Then, given  $\phi\in \g(L^{2}(\T); \FL^{s,p}(\T))$, 
  the Wick ordered SNLS \eqref{SNLS2} is pathwise locally-well posed in $\FL^{s,p}(\T)$. 
More precisely, 
given $u_0\in \FL^{s,p}(\T)$,
there exists a stopping time $T=T(\|u_{0}\|_{\FL^{s,p}},\Psi)$, which is positive almost surely, 
and a unique solution  $u$ to \eqref{SNLS2}
in the class 
\[C([0,T]; \FL^{s,p}(\T))\cap X^{s,b}_{p}([0, T])\]

\noi
for some $b >0$ such that $(b-1) p < -1$.

\end{theorem}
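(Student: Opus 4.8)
The plan is to remove the rough stochastic forcing by the Da Prato--Debussche decomposition and then to solve the resulting deterministic (but random) equation by a contraction mapping argument in the Fourier--Lebesgue-adapted Bourgain space $X^{s,b}_{p}$. Following the Da Prato--Debussche trick, write $u = v + \Psi$, where $\Psi$ is the stochastic convolution \eqref{stoconv1} (the sign and the harmless factor of $i$ coming from \eqref{SNLS1} are immaterial in what follows). Then the residual $v = u - \Psi$ solves the fixed point problem
\[
v = \Gamma v := S(t) u_0 + i \int_0^t S(t-t')\, \mathcal{N}(v+\Psi)(t')\, dt',
\]
where $\mathcal{N}$ denotes the Wick-ordered cubic nonlinearity of \eqref{SNLS2}. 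Since for $s>0$ and $1<p<\infty$ the space $\FL^{s,p}(\T)$ consists of genuine (indeed continuous) functions, no further stochastic renormalization of $\Psi$ is needed; the only renormalization is the deterministic Wick ordering, which I interpret through its Fourier representation as the diagonal \emph{resonant} contribution $-|\ft u(n)|^2\ft u(n)$ plus the off-diagonal \emph{non-resonant} trilinear sum. In particular $\mathcal{N}(w)$ is well defined for $w\in\FL^{s,p}(\T)$ even when $w\notin L^2(\T)$, so the substitution $u = v+\Psi$ makes sense pathwise and the ill-defined term $2\big(\int_\T|u|^2\,dx\big)u$ never needs to be split off on its own.

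First I would record the linear estimates in the $X^{s,b}_{p}$ framework of Gr\"unrock--Herr (Section \ref{SEC:space}): the homogeneous bound $\|S(t)u_0\|_{X^{s,b}_{p}([0,T])}\lesssim\|u_0\|_{\FL^{s,p}}$, the inhomogeneous (Duhamel) bound together with a time-localization gain $T^{\theta}$, $\theta>0$, valid precisely in the regime $b>0$ with $(b-1)p<-1$, and the embedding $X^{s,b}_{p}([0,T])\hookrightarrow C([0,T];\FL^{s,p}(\T))$. These follow from the transference principle and duality adapted to the $\ell^{p}_{n}L^{p}_{\tau}$ structure.

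The analytic heart is the trilinear estimate
\[
\big\|\mathcal{N}(u_1,u_2,u_3)\big\|_{X^{s,b-1}_{p}}\lesssim\prod_{j=1}^{3}\|u_j\|_{X^{s,b}_{p}}
\]
for the multilinear form associated with the Wick-ordered nonlinearity. For the non-resonant sum I would dyadically localize the output and input frequencies, distribute the weight via $\langle n\rangle^{s}\lesssim\langle n_1\rangle^{s}+\langle n_2\rangle^{s}+\langle n_3\rangle^{s}$ on the relation $n=n_1-n_2+n_3$, and exploit the modulation gain coming from the resonance identity $n^{2}-n_1^{2}+n_2^{2}-n_3^{2}=2(n_2-n_1)(n_2-n_3)$, which is nonzero exactly on the non-resonant frequencies and there forces the largest modulation to be large. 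Because the ambient spaces are $\ell^{p}$-based rather than $\ell^{2}$-based, Plancherel and $L^2$ orthogonality are unavailable, and I would instead argue by H\"older and counting in the spirit of Gr\"unrock--Herr; the diagonal resonant term $-|\ft u(n)|^2\ft u(n)$ is handled directly. Crucially, it is the removal of the $2\big(\int_\T|u|^2\big)u$ piece that eliminates the worst resonant interaction, exactly as in the deterministic theory, and since $s>0$ only improves the frequency bookkeeping, this estimate is an extension of the known $s=0$, $\FL^{p}$ estimate.

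Finally I would control the stochastic convolution: the assumption $\phi\in\g(L^2(\T);\FL^{s,p}(\T))$ yields $\Psi\in X^{s,b}_{p}([0,T])$ almost surely, via the representation \eqref{G1} and standard Banach-space-valued stochastic integral bounds (cf.\ Lemma~\ref{LEM:stoconv}). Expanding $\mathcal{N}(v+\Psi)$ into its trilinear constituents in $v$ and $\Psi$, applying the trilinear estimate to each summand together with the time gain $T^{\theta}$, and combining with the linear estimates shows that $\Gamma$ maps a ball of $X^{s,b}_{p}([0,T])$ into itself and is a contraction once $T=T(\|u_0\|_{\FL^{s,p}},\Psi)$ is small enough; since $\|\Psi\|_{X^{s,b}_{p}([0,T])}$ is almost surely finite and tends to $0$ as $T\to0^{+}$, this stopping time is positive almost surely. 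The unique fixed point $v$, together with $u=v+\Psi$, gives the asserted solution in $C([0,T];\FL^{s,p}(\T))\cap X^{s,b}_{p}([0,T])$. I expect the main obstacle to be the trilinear estimate arranged so as to furnish a genuine time-localization gain within the $\ell^{p}$-based spaces, where the usual $L^{2}$ orthogonality tools are unavailable and the admissible range of $b$ (with $b>0$ and $(b-1)p<-1$) must be reconciled simultaneously with the multilinear and the continuity-in-time requirements.
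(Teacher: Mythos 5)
Your overall architecture (a contraction in $X^{s,b}_{p}([0,T])$ with $(b-1)p<-1$, a trilinear estimate, and an a.s.\ bound on the stochastic convolution, cf.\ Lemma \ref{LEM:stoconv}) matches the paper's, and your Da Prato--Debussche decomposition $u=v+\Psi$ is a harmless cosmetic variation: since $\Psi$ itself lies in $X^{s,b}_{p}([0,T])$ almost surely, the paper simply contracts directly in $u$, and the two fixed-point problems are interchangeable. The genuine gap is your embedding claim: you assert $X^{s,b}_{p}([0,T])\hookrightarrow C([0,T];\FL^{s,p}(\T))$ ``precisely in the regime $b>0$ with $(b-1)p<-1$''. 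This is exactly backwards. The embedding \eqref{conti} holds for $b>\frac{1}{p'}=1-\frac1p$, i.e.\ $(b-1)p>-1$, since it rests on $\FL^{b,p}_t\hookrightarrow \FL^{1}_t\hookrightarrow C_t$; in the regime forced upon us by the temporal roughness of the noise, $b<\frac{1}{p'}$, one has $X^{s,b}_{p}([0,T])\not\subset C([0,T];\FL^{s,p}(\T))$, as the paper states explicitly. Consequently your argument produces a fixed point in $X^{s,b}_{p}([0,T])$ but does not deliver the continuity in time asserted in the theorem, which is the second half of the proof. The paper recovers it a posteriori: the linear part is trivially continuous; the Duhamel part is smoother in time because the nonlinearity lands in $X^{s,b'}_{p}$ for some $b'>-\frac1p$, so Lemma \ref{LEM:lin}\,(ii) places it in $X^{s,b+2\dl}_{p}$ with $b+2\dl>\frac{1}{p'}$, where \eqref{conti} does apply; and the continuity of $\Psi$ itself requires a separate argument (Kolmogorov's continuity criterion, Lemma \ref{LEM:stoconv2}), since $\Psi$ enjoys no such temporal smoothing. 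None of these three steps appears in your proposal.

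Two further points on the trilinear estimate. First, your target exponent $b-1$ is inadmissible: since $b<\frac{1}{p'}$ we have $b-1<-\frac1p$, violating the constraint $b'>-\frac1q=-\frac1p$ in the nonhomogeneous estimate \eqref{duhamel0}, and with $b'=b-1$ the gain $T^{1+b'-b}$ degenerates to $T^{0}$, so no time-localization factor is produced---contradicting the $T^{\theta}$ you invoke later. One must prove the estimate with some $b'\in(-\frac1p,0]$ satisfying $b'>b-1$; the paper takes $b=\frac{1}{p'}-\dl$ and $b'=-\frac1p+\dl$ (Proposition \ref{PROP:nonlin}), yielding the factor $T^{2\dl}$. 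Second, your remark that the estimate ``is an extension of the known $s=0$, $\FL^{p}$ estimate'' because ``$s>0$ only improves the frequency bookkeeping'' understates the key difficulty: the Gr\"unrock--Herr estimate is proved for $b>\frac{1}{p'}$, which is exactly the range forbidden here, and no such estimate is available at $s=0$ with $b<\frac{1}{p'}$. The hypothesis $s>0$ is not a convenience but the substitute for the lost temporal regularity: in the paper's proof the counting argument closes only under conditions such as $sp'q'>1$ combined with the divisor bound \eqref{divisor}, all of which fail at $s=0$. You do flag this tension in your final sentence, but the proof as proposed does not resolve it.
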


Here, $X^{s, b}_p([0, T])$ denotes the local-in-time version of the $X^{s, b}$-space
adapted to $\FL^{s, p}(\T)$.
See Section \ref{SEC:space} for the precise definition.

Let $\phi = \jb{\dx}^{-\al}$ be as in \eqref{phi1}.
Then, a direct computation with \eqref{G1} shows that 
$\phi \in \g(L^{2}(\T); \FL^{s,p}(\T))$ if and only if
\begin{align}
s < \al - \frac{1}{p}.
\label{phi3}
\end{align}

\noi
Hence, given $\al > 0$, we can choose 
sufficiently small $s>0$ and large $p \gg 1$ such that \eqref{phi3} holds
and thus Theorem \ref{THM:1} is applicable.
This establishes local well-posedness of the Wick ordered SNLS \eqref{SNLS2}
with almost space-time white noise.

By writing \eqref{SNLS2} in the mild formulation, we have
\begin{align*}
u(t) = S(t) u_0 + i \int_0^t S(t - t') \N(u)(t') dt'  -i \Psi
\end{align*}

\noi
where $\N(u) = (|u|^2- 2\int_\T |u|^2 dx\big)u $
denotes the Wick ordered nonlinearity in \eqref{SNLS2}
and $\Psi$ denotes the stochastic convolution in \eqref{stoconv1}.
The proof of Theorem \ref{THM:1} is based on
a two-step argument:
(i) We construct a solution $u$ in  $X^{s, b}_p([0, T])$ by 
a standard contraction argument.
The local-in-time regularity of the stochastic convolution
(= the local-in-time regularity of the Brownian motion)
forces us to choose the temporal regularity  $b$
such that $(b - 1) p < -1$.
We point out that 
 Gr\"unrock-Herr \cite{GH} imposed a stronger regularity assumption: $(b - 1) p > -1$
 and therefore their trilinear estimate (Proposition 1.3 in \cite{GH}) is not directly applicable to our problem.
We make up this loss of temporal regularity (as compared to \cite{GH})
by  taking the spatial regularity $s$ to be slightly positive.
(ii)  For this particular choice of the temporal regularity, 
we have  $X^{s, b}_p([0, T]) \not\subset C([0, T]; \FL^{s, p}(\T))$.
Hence, we need to show a posteriori the continuity in time of the solution $u$ constructed in Step (i).

\begin{remark}\rm 
(i) Our main goal is to handle the case of almost space-time white noise, 
namely,  small $s>0$ and large $p \gg 1$ such that \eqref{phi3} holds.
As such, our proof of Theorem~\ref{THM:1} is tailored for this purpose.
For example, our argument does not treat the $p = 1$ case.
Note that, when $p = 1$,  local well-posedness of \eqref{SNLS2}
follows from the algebra property of $\FL^{1}(\T)$, the unitarity of $S(t)$ on $\FL^{1}(\T)$, 
and Lemma \ref{LEM:stoconv2} and there is no need to resort to the Fourier restriction norm method.

\smallskip

\noi
(ii) As we see in the next subsection, 
SNLS with an additive space-time white noise is critical.
While Theorem \ref{THM:1} establishes an almost critical local well-posedness result,
the problem with $\phi = \text{Id}$ seems to be out of reach at this point.

\smallskip

\noi
(iii)  
In the case of a spatially homogeneous noise, namely,
when $\phi$ is a convolution operator with a kernel $K$,  
then the  $\g(L^{2}(\T); \FL^{s,p}(\T))$-norm in \eqref{G1} reduces to the
$\FL^{s, p}$-norm of the kernel function $K$.
In \cite{Oh09b}, 
 the second author 
 used such a characterization of  $\g$-radonifying operators 
 and  proved local well-posedness
of the stochastic KdV equation with an additive space-time white noise.

\smallskip

\noi
(iv) 
Given $N \in \NB$, consider the following  SNLS with a truncated nonlinearity
(but with a full space-time white noise):
\begin{equation}
i \dt u -   \dx^2 u + \P_{N}\big(|\P_N u|^{2}\P_N u\big) =   \xi 
\label{SNLS4}
\end{equation}

\noi
where 
 $\P_N$ denotes the Dirichlet projection onto the frequencies $\{ |n|\leq N\}$.
Let $u$ be a solution  to \eqref{SNLS4}
with initial data given by the (spatial) white noise, 
i.e.~$u_0^\o$ in \eqref{gauss1} with $\al =0$.
Then, by exploiting invariance of the spatial white noise
under the deterministic NLS (with a truncated nonlinearity; see \cite{OQV}), 
we can show that the solution $u$ at time $t>0$
is given by the spatial white noise of variance $1+t$. 
The same result also holds for the Wick ordered SNLS with a truncated nonlinearity.
This would provide a basis for applying a modification of Bourgain's invariant measure argument
\cite{BO94, BO96} in constructing global-in-time
dynamics for  the Wick ordered SNLS \eqref{SNLS2} with $\phi = \text{Id}$.
See \cite{OQS} for details.
Unfortunately, we do not know how to construct local-in-time dynamics
for the Wick ordered SNLS \eqref{SNLS2} with $\phi = \text{Id}$.

\end{remark}

\begin{remark}\rm
In a recent paper  \cite{OW3}, 
the second and third authors 
exploited the completely integrable structure of the equation
and proved global well-posedness of~the renormalized (deterministic) NLS \eqref{NLS0a} 
in $\F L^{s, p}(\T)$ for $s\geq 0 $ and $ 1\leq p < \infty$. 
It would be of interest to investigate the global-in-time behavior of solutions
to the renormalized SNLS~\eqref{SNLS2} constructed in Theorem \ref{THM:1}.

\end{remark}

\subsection{On the criticality of SNLS with space-time white noise}
\label{SUBSEC:critical}

In this subsection, we discuss a  notion of criticality
for SNLS \eqref{SNLS1} with an additive space-time white noise forcing
(i.e.~$\phi = \text{Id}$); see also a discussion in \cite{BOP4}.
Before doing so, 
let us first go over 
Hairer's notion of local (sub)criticality \cite{Hairer}
by considering  the following stochastic cubic heat equation 
with an additive space-time white noise forcing
on the $d$-dimensional spatial domain:\footnote{We have $\T^d$
in mind but we intentionally remain vague about the underlying spatial domain for the purpose of scaling in this formal discussion.}
\begin{align}
 \dt u -  \Dl u + u^3 = \xi.
\label{SQE}
\end{align}

\noi
Here, $u$ is a real-valued function/distribution and  $\xi$ denotes a space-time white noise.
The equation \eqref{SQE} is also known as  the stochastic quantization equation (SQE).
The white noise scaling\footnote{Recalling that 
$\E[\xi(t_1, x_1) \xi(t_2, x_2)] = \dl(t_1 - t_2) \dl(x_1 - x_2)$, 
we see that the white noise
behaves like a square root of the Dirac delta function, which gives an intuition for 
the white noise scaling \eqref{sc1}. Moreover, in view of the linear part of the equation, 
we count one temporal dimension as two spatial dimensions.}
tells us that 
\begin{align}
 \xi_\ld(t, x) =  \ld^\frac{d+2}{2} \xi(\ld^2 t , \ld x)
 \label{sc1}
\end{align}

\noi
is also a space-time white noise.
By applying the following scaling to the unknown
\begin{align}
u_\ld(t, x) =  \ld^{\frac{d}{2}-1} u(\ld^2t, \ld x)
 \label{sc2}
\end{align}

\noi
we see that the scaled function $u_\ld$ satisfies the following equation:
\begin{align}
 \dt u_\ld -  \Dl u_\ld  + \ld^{4-d} u_\ld^3 = \xi_\ld.
\label{SQE2}
\end{align}

\noi
As $\ld \to 0$, namely, 
 studying  the behavior of a solution at smaller and smaller scales, 
we see that, at least at a formal level, the nonlinearity vanishes
and \eqref{SQE2} reduces to the following stochastic heat equation:
\begin{align*}
 \dt u -  \Dl u  = \xi, 
\end{align*}

\noi
provided that $ 4- d > 0$.
This formal discussion shows that SQE \eqref{SQE} in dimensions $d = 1, 2$, and $3$
is locally subcritical, while it is locally critical when $d = 4$.
See Section~8 in~\cite{Hairer} for a more rigorous definition of local subcriticality.
Indeed, when $d \leq 3$, 
SQE~\eqref{SQE} is known to be well-posed
(after an appropriate renormalization for $d = 2, 3$);
see  \cite{DPD2, Hairer, CC, MW}.
As we see below, 
while this notion of local criticality is suitable
for studying the heat equation, 
it  is not a suitable concept 
for studying the Schr\"odinger equation.

By repeating  a similar  scaling argument for 
SNLS with an additive space-time white noise:
\begin{align}
i \dt u -  \Dl u + |u|^2 u  = \xi
\label{XNLS1}
\end{align}

\noi
with \eqref{sc1} and \eqref{sc2}, 
we arrive at the scaled equation:
\begin{align}
i  \dt u_\ld -  \Dl u_\ld  + \ld^{4-d} |u_\ld|^2 u_\ld = \xi_\ld.
\label{XNLS2}
\end{align}

\noi
It is tempting to conclude that, by taking $\ld \to 0$, 
we may  neglect the effect of the nonlinearity when $d \leq 3$ as in the case of SQE.
This is,  however,  not quite correct since, 
in order to compare the sizes of the terms in \eqref{XNLS2}, 
we need to measure them in a norm compatible with the  Schr\"odinger equation.
For example, in terms of the $L^2$-based homogeneous Sobolev spaces, 
the $\dot H^{-\frac{d}{2}-}$-norm\footnote{We  use  $a-$ to denote  $a - \eps$ for arbitrarily small $\eps \ll 1$,
where a relevant norm diverges as $\eps \to 0$.} captures the (spatial) regularity of the white noise.
On the other hand, the scaling \eqref{sc2} preserves the $\dot H^1$-norm, 
while 
the $\dot H^{-\frac{d}{2}-}$-norm scales by a factor of $ \ld^{-\frac{d}{2}-1}$.\footnote{Strictly speaking, we have a factor of $ \ld^{-\frac{d}{2}-1+}$ here. We, however, use $\ld^{-\frac{d}{2}-1}$ for simplicity.
Recall also  the endpoint Besov regularity of the (spatial) white noise in 
$B^{-\frac{d}{2}}_{2, \infty}$; see \cite{Roy, BenyiOh}.}
By combining the factor $\ld^{4-d}$ in \eqref{XNLS2} with $(\ld^{-\frac{d}{2}-1})^2$, 
we essentially have $\ld^{2 - 2d}$ as the size of the nonlinearity (relative to the linear term $u_\ld$).
This shows that SNLS \eqref{XNLS1} with an additive space-time white noise 
is  {\it critical}  when $d = 1$.

Hairer's notion of local criticality
is useful for studying the heat equation since it is adapted
to the H\"older spaces $C^s = B^s_{\infty, \infty}$.
Recall that the stochastic convolution 
\[ \Psi_\text{heat} = \int_0^t e^{(t-t')\Dl} \xi(dt')\]

\noi
for the problem \eqref{SQE}
has the spatial regularity $- \frac d2 + 1-$ almost surely.
We expect a solution $u$ to \eqref{SQE} to have the  same regularity.
Noting that  the scaling \eqref{sc2} (essentially) preserves the 
relevant $\dot B^{-\frac{d}{2} + 1-}_{\infty, \infty}$-norm, 
we conclude from \eqref{SQE2} that SQE \eqref{SQE}
is critical when $d = 4$.
On the other hand, 
our discussion above on  a notion of criticality 
for SNLS was based on  the $L^2$-Sobolev spaces.
We may also carry out a similar analysis in terms of the homogeneous
Fourier-Lebesgue spaces $\dot \FL^{s, p}$
defined by the norm:
\begin{align*}
\|f\|_{\dot \FL^{s,p}(\R^d)}:=\big\||\zeta|^{s}\ft f(\zeta)\big\|_{L^{p}(\R^d)}.
\end{align*}

\noi
Note that the $\dot \FL^{s, p}$-norm with $s = -\frac{d}{p}-$
 captures the (spatial) regularity of the white noise.
One can easily check that,  
under the scaling~\eqref{sc2}, 
the $\dot \FL^{-\frac{d}{p}-, p}$-norm also scales by the factor of $ \ld^{-\frac{d}{2}-1}$
(just like the $H^{-\frac d2 -}$-norm), 
confirming the criticality of 
SNLS \eqref{XNLS1} with an additive space-time white noise 
 when $d = 1$.
The main point is that, 
in discussing a notion of criticality, 
we need to specify a function space
suitable for studying a given equation
and to incorporate the effect of the scaling on the size of the scaled function
(measured in the relevant norm). 
This point was not clearly mentioned in Hairer's presentation since 
the relevant H\"older norm is preserved under the scaling \eqref{sc2}.

\medskip

We can also argue in terms of the more standard scaling analysis.
It is well known that the cubic nonlinear Schr\"odinger equation on $\R^d$ remains invariant
under the following scaling symmetry:
\begin{align}
u^\ld(t, x) = \ld u( \ld^2 t, \ld x).
\label{sc3}
\end{align}

\noi
A direct computation shows
that the homogeneous $\dot W^{s_\text{crit}(p), p}$-norm is preserved under the scaling
\eqref{sc3}, where $s_\text{crit}(p)$ is given by 
\begin{align}
s_\text{crit}(p) = \frac{d}{p} - 1.
\label{crit1}
\end{align}

\noi
In studying NLS, we need to use the $L^2$-based Sobolev space since the linear Schr\"odinger 
propagator $e^{- it\Dl}$
is bounded on $L^2$ but is unbounded on $L^p$, $p \ne 2$.
This gives the (usual) scaling-critical Sobolev regularity:
\begin{align*}
s_\text{crit}(2) = \frac d2 -1.
\end{align*}

\noi
Now, consider SNLS \eqref{XNLS1} with an additive space-time white noise forcing.
Then, we see that 
\[ s_\text{crit}(2) = \frac d2 -1 > -\frac{d}{2}- = \text{ spatial regularity of the space-time white noise}\]

\noi
with almost an equality when $d = 1$.
This shows that the equation  \eqref{XNLS1} is critical when $d = 1$.
We point out that, when $d = 1$, 
 the (deterministic) Wick ordered NLS \eqref{NLS0a}
is known to be ill-posed at the critical regularity $ s_\text{crit}(2) = \frac d2 -1$ \cite{Kishimoto, O17, OW},
which shows the difficulty of the SNLS problem \eqref{SNLS2} with $\phi = \text{Id}$.

Similarly, noting  that the scaling-critical regularity (with respect to the scaling symmetry \eqref{sc3})
for the homogeneous Fourier-Lebesgue spaces $\dot \FL^{s, p}$
is given by 
\begin{align*}
\ft s_\text{crit}(p) = s_\text{crit}(p') = d - 1- \frac{d}{p} , 
\end{align*}

\noi
we have
\begin{align*}
\ft  s_\text{crit}(p) & = 
d - 1 - \frac{d}{p}   > -\frac{d}{p}- \\
&  = \text{ spatial regularity of the space-time white noise measured in } \dot \FL^{s, p}
\end{align*}

\noi
with almost an equality when $d = 1$.
Hence, 
we also conclude that 
the equation  \eqref{XNLS1} is critical when $d = 1$ in terms 
of the Fourier-Lebesgue spaces.

Lastly, let us consider SQE \eqref{SQE}.
In this case, the stochastic convolution $\Psi_\text{heat}$ gains one spatial derivative
and has spatial regularity 
\begin{align}
-\frac{d}{2} + 1 -.
\label{reg1}
\end{align}
On the other hand, we know that the linear heat propagator
$e^{t\Dl}$ is bounded in the $L^\infty$-based Sobolev spaces 
and the H\"older spaces $B^s_{\infty, \infty}$.
Namely, we can set $p = \infty$ in \eqref{crit1}, yielding
\begin{align}
s_\text{crit}(\infty) =  -1.
\label{crit4}
\end{align}

\noi
By comparing \eqref{reg1} and \eqref{crit4}, we obtain
\[ s_\text{crit}(\infty) =  -1 < -\frac{d}{2} + 1 -\]

\noi
for $d \leq 3$ and an almost equality holds when $d = 4$, showing
that the SQE problem \eqref{SQE} is critical when $d = 4$.

\medskip

This paper is organized as follows.
In Section \ref{SEC:space}, 
we recall the definition and basic properties
of the $X^{s, b}$-spaces adapted to the Fourier-Lebesgue spaces.
We then study the regularity properties of the stochastic convolution
in Section \ref{SEC:stoconv}.
In Section~\ref{SEC:nonlin}, we prove the crucial trilinear estimate,
which is then used to prove Theorem \ref{THM:1} in 
Section~\ref{SEC:THM}.
In Appendix \ref{SEC:gamma}, we go over basic definitions
and properties of $\g$-radonifying operators.

\section{Function spaces and their properties}\label{SEC:space}

Let $\S(\R\times \T)$ be the vector space 
of $C^{\infty}$-functions $u:\R^{2}\rightarrow \C$ 
such that
\[ u(t,x)=u(t,x+1) \quad 
\text{and} \quad\sup_{(t,x)\in \R^{2}}|t^{\al}\dt^{\be}\dx^{\g}u(t,x)|<\infty\]

\noi
for any $\al,\beta,\g\in \NB\cup\{0\}$.
In the seminal paper \cite{BO1}, Bourgain introduced the $X^{s, b}$-spaces
defined by the norm:
\begin{equation*}
\|u\|_{X^{s,b}(\R \times \T)}=\|\jb{n}^{s} \jb{\tau-n^{2}}^{b}
\ft u(\tau, n) \|_{\l_{n}^{2}L^{2}_{\tau}(\Z\times \R)}. 
\end{equation*}

\noi
We now recall the definition of the $X^{s, b}$-spaces  adapted to the Fourier-Lebesgue spaces;
see Gr\"unrock-Herr \cite{GH}.

\begin{definition} \rm
Let $s,b\in \R$, $1\leq p,q\leq \infty$. We define the space $X^{s,b}_{p,q}(\R \times \T)$ as the completion of 
$\S(\R\times \T)$ with respect to the norm 
\begin{equation*}
\|u\|_{X^{s,b}_{p,q}(\R \times \T)}=\|\jb{n}^{s} \jb{\tau-n^{2}}^{b}
\ft u(\tau, n) \|_{\l_{n}^{p}L^{q}_{\tau}(\Z\times \R)}. 
\end{equation*}
\end{definition}

For brevity, we simply  denote $X^{s,b}_{p, q}(\R \times \T)$ by $X^{s,b}_{p, q}$.
When $p = q$,  we set $X^{s,b}_{p}=X^{s,b}_{p, p}$. 
Recall the following characterization of the $X^{s, b}_{p, q}$-norm
in terms of the interaction representation $S(-t) u(t)$:
\begin{equation} 
\|u\|_{X^{s,b}_{p, q}}=\|S(-t)u(t)\|_{\FL^{s,p}_{x}\FL^{b,q}_{t}},
 \label{Xsb2}
\end{equation}

\noi
where the iterated norm is to be understood in the following sense:
 \begin{align}
 \|u\|_{\FL^{s,p}_{x}\FL^{b,q}_{t}}:=\|\jb{n}^{s} \jb{\tau}^{b}\ft u(\tau,n)\|_{\l^{p}_{n}L^{q}_{\tau}}
 =\| \|\jb{n}^{s}\ft u (t, n)\|_{\FL^{b,q}_{t}}\|_{\l^{p}_{n}}.
 \label{Xsb3}
 \end{align}
 
 \noi
 Note that these spaces are separable when $p,q<\infty$.
The identity \eqref{Xsb2} follows from
 $\ft{S(t) u}(\tau, n) = \ft u(\tau + n^2, n)$, the first equality in \eqref{Xsb3}, 
 and a change of variable: $\tau \mapsto \tau + n^2$.

For any $1\leq p < \infty$, $s\in \R$, we have 
\begin{equation}\label{conti}
X^{s,b}_{p, q} \embeds C(\R; \FL^{s,p}(\T)), \quad \text{if }  b>\frac{1}{q'} = 1 - \frac 1q.
\end{equation} 

\noi
This is a consequence of the dominated convergence theorem along with the following embedding relation:
 $\FL^{b, q}_t \embeds \FL^1_t \embeds C_{t}$,
where the second embedding 
is  the Riemann-Lebesgue lemma.

Given $T>0$, 
 we also define the local-in-time version $X^{s,b}_{p,q}([0,T])$ of 
the  $X^{s, b}_{p, q}$-space
as the collection of functions $u$ such that 
\begin{equation}
\|u\|_{X^{s,b}_{p, q}([0, T])}:=\inf \big\{\|v\|_{X^{s,b}_{p, q}} 
\, : \,  v|_{[0,T]}=u \big\}
\label{Xsb4}
\end{equation}

\noi
is finite.
For simplicity, we often denote
 $X^{s,b}_{p,q}([0,T])$ by $X^{s,b}_{p, q;T}$.

Lastly, we recall  the following linear estimates.

\begin{lemma}\label{LEM:lin} 
\textup{(i) (Homogeneous linear estimate).}
Given $1\leq p, q\leq \infty$  and $s,b\in \R$, we have 
\begin{equation*}
\|S(t)f\|_{X^{s,b}_{p, q;T}} \les \|f\|_{\FL^{s,p}}
\end{equation*}

\noi
for any $0 < T \leq 1$.

\medskip

\noi
\textup{(ii) (Nonhomogeneous linear estimate).}
Let $s\in \R$, $1\leq p\leq \infty$, $1<q<\infty$ and $-\frac{1}{q}<b'\leq 0\leq b \leq 1+b'.$ 
Then, we have 
\begin{equation}
\label{duhamel0}
\bigg\| \int_0^tS(t-t')F(t')dt'\bigg\|_{X^{s,b}_{p, q; T}}\les T^{1+b'-b}\|F\|_{X^{s,b'}_{p, q; T}}
\end{equation}

\noi
for any $0< T \leq 1$.

\end{lemma}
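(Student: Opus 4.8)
The plan is to reduce both estimates to the interaction representation via the identity \eqref{Xsb2}, which trivializes the Schr\"odinger propagator and decouples the spatial and temporal structure; after this reduction, part (i) becomes immediate and part (ii) collapses to a one-dimensional (temporal) bound.

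For (i), I would fix a smooth time cutoff $\chi\in C_c^\infty(\R)$ with $\chi\equiv 1$ on $[-1,1]$. Since $0<T\le 1$, the function $\chi(t)S(t)f$ agrees with $S(t)f$ on $[0,T]$, so by the definition \eqref{Xsb4} of the local norm it suffices to bound $\|\chi(t)S(t)f\|_{X^{s,b}_{p,q}}$. Applying \eqref{Xsb2} together with $S(-t)\big(\chi(t)S(t)f\big)=\chi(t)f$ reduces this to $\|\chi(t)f\|_{\FL^{s,p}_x\FL^{b,q}_t}$. As $\chi(t)f(x)$ is a product, its space-time Fourier transform factors as $\ft{\chi}(\tau)\ft{f}(n)$, and the iterated norm in \eqref{Xsb3} splits as $\|f\|_{\FL^{s,p}}\,\|\chi\|_{\FL^{b,q}_t}$. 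The temporal factor $\|\jb{\tau}^{b}\ft{\chi}\|_{L^q_\tau}$ is finite for every $b$ and $q$ because $\ft{\chi}$ is Schwartz, which yields (i) with constant depending only on $\chi$.

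For (ii), I would again invoke \eqref{Xsb2}: writing $G(t)=S(-t)F(t)$, one has $\int_0^t S(t-t')F(t')\,dt'=S(t)\int_0^t G(t')\,dt'$, so the propagator is inert and the left-hand side of \eqref{duhamel0} equals the $\FL^{s,p}_x\FL^{b,q}_t$-norm (on $[0,T]$) of $w(t):=\int_0^t G(t')\,dt'$. Since $S(t)$ is diagonal in the spatial frequency $n$, for each fixed $n$ one has $\ft{w}(t,n)=\int_0^t \ft{G}(t',n)\,dt'$, i.e.\ the same scalar time-integration operator acts on every frequency. Because $\ell^p_n$ is the \emph{outer} norm in \eqref{Xsb3}, the weight $\jb{n}^s$ factors through, and it suffices to prove, uniformly in $n$, the scalar temporal estimate
\begin{equation*}
\Big\|\int_0^t g(t')\,dt'\Big\|_{\FL^{b,q}_t([0,T])}\lesssim T^{1+b'-b}\,\|g\|_{\FL^{b',q}_t([0,T])},
\end{equation*}
where the norms are restriction norms on $[0,T]$ and $-\tfrac1q<b'\le 0\le b\le 1+b'$; applying it to $g=\ft{G}(\cdot,n)$, multiplying by $\jb{n}^s$, and summing in $\ell^p_n$ recovers \eqref{duhamel0}.

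This scalar estimate is the crux and the main obstacle. Working with a rescaled time cutoff $\psi(\cdot/T)$ (which acts boundedly on $\FL^{b,q}_t$ and whose rescaling produces the power $T^{1+b'-b}$ via H\"older in time), I would start from the representation
\begin{equation*}
\int_0^t g(t')\,dt'=\int_\R \frac{e^{it\tau}-1}{i\tau}\,\ft{g}(\tau)\,d\tau
\end{equation*}
(the temporal Fourier transform normalized accordingly) and split the $\tau$-integral into $|\tau|\le 1$ and $|\tau|>1$. On $|\tau|\le 1$ the multiplier $(e^{it\tau}-1)/(i\tau)$ is smooth and $O(|t|)$, so this piece is a smooth function of $t$ bounded on the support of the cutoff by $\int_{|\tau|\le 1}|\ft{g}|\lesssim\|g\|_{\FL^{b',q}_t}$, which is harmless. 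On $|\tau|>1$ I would write $(e^{it\tau}-1)/(i\tau)=e^{it\tau}/(i\tau)-1/(i\tau)$: the first piece is a genuine Duhamel term whose symbol satisfies $\jb{\tau}^{b}/|\tau|\lesssim\jb{\tau}^{b'}$ precisely when $b\le 1+b'$, gaining the needed derivative, while the second is a constant-in-$t$ homogeneous term whose coefficient $\int_{|\tau|>1}\ft{g}(\tau)/(i\tau)\,d\tau$ is controlled, via H\"older, by $\|g\|_{\FL^{b',q}_t}\,\|\jb{\tau}^{-b'-1}\|_{L^{q'}(|\tau|>1)}$. This last factor is finite exactly when $b'>-\tfrac1q$, which is the Fourier-Lebesgue analogue (through Hausdorff-Young) of the classical $b'>-\tfrac12$ threshold and is the single place where the lower bound on $b'$ is used; the outer $\ell^p_n$ summation then completes the proof of \eqref{duhamel0}.
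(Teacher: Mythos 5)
Your part (i) coincides with the paper's proof (same cutoff, same factorization of the space--time Fourier transform), and your reduction of part (ii) --- stripping off $S(t)$ via \eqref{Xsb2}, proving a purely temporal inequality, applying it at each fixed frequency $n$ and summing in $\l^p_n$ --- is exactly the paper's route as well. The only difference is that the paper does not prove the scalar estimate: it quotes it, in the cutoff form \eqref{duhamel1}, from (2.21) in \cite{Grock1}. So your proposal stands or falls with your proof of the scalar inequality, which you rightly call the crux, and there it has a genuine gap.

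The gap is the $T$-independent frequency splitting at $|\tau|=1$: it cannot produce the factor $T^{1+b'-b}$. Concretely, consider your two pieces on $|\tau|>1$. For the constant-in-$t$ term, your own H\"older bound gives a coefficient $\les \|g\|_{\FL^{b',q}_t}\|\jb{\tau}^{-1-b'}\|_{L^{q'}(|\tau|>1)}$ with no smallness in $T$, and multiplying a constant by the cutoff costs $\|\psi(\cdot/T)\|_{\FL^{b,q}_t}\les T^{1-\frac1q-b}$; so this piece is only $O\big(T^{1-\frac1q-b}\|g\|_{\FL^{b',q}_t}\big)$, and since the hypothesis $b'>-\frac1q$ forces $1-\frac1q-b<1+b'-b$, this is strictly weaker than the claimed bound as $T\to 0$. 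The oscillatory piece $\int_{|\tau|>1}e^{it\tau}\ft g(\tau)/(i\tau)\,d\tau$ is worse: the symbol bound $\jb{\tau}^{b}/|\tau|\les\jb{\tau}^{b'}$ on $|\tau|>1$ yields an estimate with no power of $T$ at all, and the ``H\"older in time'' gain you attribute to the rescaled cutoff is not available here --- for $b=0$ and $b'\in(-\frac1q,0)$ one would need multiplication by $\psi(\cdot/T)$ to gain $T^{1+b'}$ on functions with temporal frequencies of size $\approx 1$, whereas it gains at most $T^{1-\frac1q}$, which falls short precisely because $b'>-\frac1q$. The standard repair, which is how \eqref{duhamel1} is actually proved in \cite{Grock1} (following the argument in \cite{GTV}), is to split at $|\tau|\sim T^{-1}$ instead: on $|\tau|>T^{-1}$ one has $\jb{\tau}^{b}/|\tau|\les T^{1+b'-b}\jb{\tau}^{b'}$ (this is where $b\le 1+b'$ enters), and H\"older gives $\|\jb{\tau}^{-1-b'}\|_{L^{q'}(|\tau|>1/T)}\les T^{1+b'-\frac1{q'}}$ (this is where $b'>-\frac1q$ enters), which combined with $\|\psi(\cdot/T)\|_{\FL^{b,q}_t}\les T^{1-\frac1q-b}$ yields exactly $T^{1+b'-b}$. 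Separately, your treatment of the low-frequency piece is also incomplete: pointwise boundedness in $t$ of a smooth function does not control its $\FL^{b,q}_t$-norm after multiplication by the cutoff; one should Taylor-expand $(e^{it\tau}-1)/(i\tau)$ and estimate $\|t^k\psi(t/T)\|_{\FL^{b,q}_t}\les C^k T^{k+1-\frac1q-b}$ term by term, which does close. As written, your argument proves only a weaker estimate --- enough, incidentally, for the contraction in Section 5, where any positive power of $T$ suffices, but not the lemma as stated.
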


\begin{proof} 
The proof of (i) is straightforward from \eqref{Xsb2}.
More precisely, by letting  $\eta\in C^{\infty}_c(\R)$ be a smooth cutoff function 
such that 
$\eta(t)\equiv 1$ for $t\in [0,1]$ and 
$\supp \eta \subset [-2,2]$, 
it follows from \eqref{Xsb4} and then \eqref{Xsb2} that 
\begin{align*}
\|S(t)f\|_{X^{s,b}_{p, q;T}}
& \leq \|\eta (t) S(t)f\|_{X^{s,b}_{p, q}}
= \| \jb{n}^s \jb{\tau}^b \ft \eta(\tau) \ft f(n) \|_{\l^p_n L^q_\tau}\\
&  = \|\eta\|_{H^b_t} \|f\|_{\FL^{s,p}}.
\end{align*}

\noi
This proves (i).
The proof of the nonhomogeneous estimate \eqref{duhamel0}
is also standard.
From  (2.21) in \cite{Grock1}, we have
\begin{equation} \label{duhamel1}
\bigg \| \eta\left( \frac{t}{T}\right)\int_{0}^{t}f(t')dt'\bigg\|_{\FL^{b,q}_{t}}\les T^{1+b'-b}\|f\|_{\FL^{b',q}_{t}}.
\end{equation}

\noi
Given a function $F$ on  $[0,T]\times \T$, 
let $\wt{F}$ be an extension of $F$ onto $\R\times \T$. 
 Then,  it follows from  \eqref{Xsb2} and \eqref{duhamel1} that 
\begin{align*}
\bigg\| \int_{0}^{t}S(t-t')F(t')dt'\bigg\|_{X^{s,b}_{p, q; T}} & \leq \bigg\|\eta\left( \frac{t}{T}\right) \int_{0}^{t}S(t-t')\wt{F}(t')dt'\bigg\|_{X^{s,b}_{p,q}}  \\
& = \Bigg\|  \jb{n}^{s} \bigg\| \eta\left( \frac{t}{T}\right) \int_{0}^{t}
\F_x(S(-t')\wt{F})(t',  n)dt'   
\bigg\|_{\FL^{b,q}_{t}} \Bigg\|_{\ell^{p}_{n}}  \\
& \les T^{1+b'-b} \Big\|\jb{n}^{s}  \big\| \F_x(S(-t)\wt{F}(t))(n) \big\|_{\FL^{b',q}_{t}} \Big\|_{\ell^{p}_{n}}  \\
& =T^{1+b'-b}\|\wt{F}\|_{X^{s,b'}_{p,q}}.
\end{align*} 

\noi
Here, $\F_x$ denotes the spatial Fourier transform.
Taking the infimum over all  extensions $\wt{F}$ of $F$,
we obtain \eqref{duhamel0}. 
\end{proof}

\section{On the stochastic convolution} \label{SEC:stoconv}

In this section, we study the regularity properties of the stochastic convolution $\Psi$ defined in \eqref{stoconv1}.
For this purpose, let us first recall the definition of 
a cylindrical Wiener process $W$ on $L^2(\T)$;
a cylindrical Wiener process $W$ on $L^2(\T)$ is defined by the following random Fourier series:
\begin{align*}
 W(t) & =  \sum_{n\in\Z}
\be_n (t) e_n, 
\end{align*}

\noi
where $e_n(x) = e^{2\pi inx}$ and $\{ \be_n\}_{n \in \Z}$ is a family of  mutually independent complex-valued Brownian
motions.
In terms of the cylindrical Wiener process $W$, 
we can express the  stochastic convolution $\Psi$ in~\eqref{stoconv1}
as
\begin{align}
\Psi(t) =   \int_0^t S(t - t') \phi dW (t')
=   \sum_{n \in \Z}
\int_0^t S(t - t') \phi(e_n)  d\beta_n (t').
\label{S0}
\end{align}

It is easy to see that the space-time white noise almost surely belongs to 
 $\FL^{s, p}_{x}\FL^{b, q}_{t, \text{loc}}$
 if and only if $sp < -1$ and $bq < -1$.
The stochastic convolution, when measured in terms of its interaction representation $S(-t) \Psi(t)$, 
then gains one temporal regularity and thus has 
temporal regularity $b < 1 -\frac{1}{q}$.
This is precisely the regularity of the Brownian motion measured in the Fourier-Lebesgue spaces;
see \cite{BenyiOh}.
The next lemma tells us the regularity of the stochastic convolution with respect
to the $X^{s, b}_{p, q}$-spaces.

\begin{lemma}\label{LEM:stoconv} 
Let  $s \in \R$, $1\leq p<\infty$, $1<q<\infty$ such that  $ b<1-\frac{1}{q}$.
 Given $\phi \in \g(L^{2}(\T); \FL^{s,p}(\T))$, there exist $C, c>0$ such that
\begin{equation}
P\Big( \|\Psi\|_{X^{s,b}_{p,q;T}}>\ld\Big) 
\leq C\exp\bigg(-\frac{c\lambda^{2}}{
T^{3-2b- \frac{2}{q}}\|\phi\|_{\g(L^{2}; \FL^{s,p})}^{2}}\bigg)
\label{S0a}
\end{equation}

\noi
for any $\ld > 0$ and $0 < T \leq 1$.
In particular, $\Psi \in X^{s,b}_{p,q}([0, T])$ almost surely.
\end{lemma}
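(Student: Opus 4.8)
The plan is to exploit that $\Psi$ is a Gaussian random field and to reduce \eqref{S0a} to a single moment bound of the form
\begin{equation}
\big(\E\|\Psi\|_{X^{s,b}_{p,q;T}}^{r}\big)^{\frac1r}\les \sqrt{r}\,T^{\frac{3-2b-2/q}{2}}\,\|\phi\|_{\g(L^{2};\FL^{s,p})},
\label{planmoment}
\end{equation}
valid for all $r\geq \max(p,q)$. Once \eqref{planmoment} holds, Chebyshev's inequality gives $P(\|\Psi\|_{X^{s,b}_{p,q;T}}>\ld)\leq \ld^{-r}(C\sqrt r\,D^{1/2})^{r}$ with $D:=T^{3-2b-2/q}\|\phi\|_{\g(L^2;\FL^{s,p})}^{2}$, and optimizing over $r$ (taking $r\sim \ld^{2}/(eC^{2}D)$) produces the Gaussian tail \eqref{S0a}, hence the almost sure finiteness of $\|\Psi\|_{X^{s,b}_{p,q;T}}$ as well. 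To estimate the local norm it is enough to exhibit one admissible extension of $\Psi$ off $[0,T]$: I would take $v(t):=\eta(t/T)\int_0^{t}S(t-t')\phi\,\ind_{[0,T]}(t')\,dW(t')$, where $\eta$ is the cutoff from the proof of Lemma \ref{LEM:lin}, so that $v|_{[0,T]}=\Psi$ and therefore $\|\Psi\|_{X^{s,b}_{p,q;T}}\leq\|v\|_{X^{s,b}_{p,q}}$.

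Next I would reduce \eqref{planmoment} to a pointwise-in-$(\tau,n)$ second moment. Setting $Y_n(t):=\F_x\big(S(-t)v(t)\big)(n)$, the characterization \eqref{Xsb2} gives $\|v\|_{X^{s,b}_{p,q}}=\|\jb n^{s}\jb\tau^{b}\,\ft{Y_n}(\tau)\|_{\l^p_nL^q_\tau}$, where $\ft{Y_n}$ denotes the Fourier transform of $t\mapsto Y_n(t)$. Since $r\geq\max(p,q)$, two applications of Minkowski's integral inequality allow me to move the $L^r_\omega$-norm inside the $\l^p_nL^q_\tau$-norm, yielding $(\E\|v\|_{X^{s,b}_{p,q}}^{r})^{1/r}\leq\|\jb n^{s}\jb\tau^{b}(\E|\ft{Y_n}(\tau)|^{r})^{1/r}\|_{\l^p_nL^q_\tau}$. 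For fixed $(\tau,n)$ the quantity $\ft{Y_n}(\tau)$ is a stochastic integral, hence lies in the first Wiener chaos, and Gaussian hypercontractivity gives $(\E|\ft{Y_n}(\tau)|^{r})^{1/r}\les\sqrt r\,(\E|\ft{Y_n}(\tau)|^{2})^{1/2}$, reducing everything to the second moment.

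The heart of the matter is the second-moment computation together with the extraction of the $T$-scaling. From the explicit form $Y_n(t)=\eta(t/T)\sum_k \ft{\phi(e_k)}(n)\int_0^{t}e^{-it'\o(n)}\ind_{[0,T]}(t')\,d\be_k(t')$, the Itô isometry and the independence of the $\be_k$ give, for $t,\sigma\geq0$,
\begin{equation*}
\E\big[Y_n(t)\,\cj{Y_n(\sigma)}\big]=\rho_n^{2}\,\eta(t/T)\eta(\sigma/T)\min(t,\sigma,T),\qquad \rho_n^{2}:=\sum_k\big|\ft{\phi(e_k)}(n)\big|^{2},
\end{equation*}
the unimodular phases $e^{-it'\o(n)}$ cancelling. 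Taking the temporal Fourier transform and rescaling $t=Ts$ shows $\E|\ft{Y_n}(\tau)|^{2}=\rho_n^{2}\,T^{3}K(T\tau)$, where $K$ is a fixed nonnegative kernel with $K(\mu)\les\min(1,|\mu|^{-2})$ (reflecting the $|\tau|^{-1}$ spectral decay of Brownian motion). Crucially, the $n$- and $\tau$-dependence separate, so by \eqref{G1} the factor $\|\jb n^{s}\rho_n\|_{\l^p_n}=\|\phi\|_{\g(L^{2};\FL^{s,p})}$ comes out exactly, and it remains to bound $\|\jb\tau^{b}K(T\tau)^{1/2}\|_{L^q_\tau}$. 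Substituting $\mu=T\tau$ and using $K(\mu)^{1/2}\les\min(1,|\mu|^{-1})$, a direct computation gives $\|\jb\tau^{b}K(T\tau)^{1/2}\|_{L^q_\tau}\les T^{-b-1/q}$ for $0<T\leq1$, the temporal integral converging precisely because $b<1-\frac1q$. Combining the factors $T^{3/2}$ and $T^{-b-1/q}$ yields \eqref{planmoment}.

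I expect the main obstacle to be the faithful treatment of the time-localization: one must choose the extension so that the truncated stochastic integral has an explicit covariance, then carry out the rescaling that produces the sharp power $T^{3-2b-2/q}$ and identify $b<1-\frac1q$ as exactly the threshold making the temporal Fourier integral (equivalently, the Brownian spectral density weighted by $\jb\tau^{bq}$) integrable. The remaining ingredients---the Itô isometry, hypercontractivity on the first chaos, and the passage from the moment bound \eqref{planmoment} to the exponential tail \eqref{S0a}---are by now standard.
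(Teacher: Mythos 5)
Your proposal is correct and is essentially the paper's proof: both arguments extend $\Psi$ beyond $[0,T]$, use Minkowski's inequality (for moments of order $r\geq\max(p,q)$) to reduce the $X^{s,b}_{p,q}$-norm to pointwise-in-$(\tau,n)$ Gaussian moments, invoke the first-chaos moment equivalence (the paper's Lemma \ref{LEM:gauss}) together with the It\^o isometry, integrate in $\tau$ under the condition $b<1-\frac{1}{q}$, and conclude via Chebyshev's inequality; in particular your bound $T^{3/2}K(T\tau)^{1/2}$ with $K(\mu)^{1/2}\les \min(1,|\mu|^{-1})$ coincides with the paper's factor $T^{1/2}\min(T,|\tau|^{-1})$. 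The only deviations are cosmetic: the paper takes the sharp truncation $\ind_{[0,T]}(t)\Psi(t)$ as the extension (rather than your smooth cutoff $\eta(t/T)$), and it obtains the second moment by applying the stochastic Fubini theorem to write the time-Fourier transform of the interaction representation directly as a Wiener integral, rather than via your covariance-kernel-and-rescaling computation.
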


We first recall the following equivalence of moments for Gaussian random variables.
\begin{lemma}\label{LEM:gauss} 
Let $\{g_{n}\}_{n\in \Z}$ be a sequence of independent standard  complex-valued Gaussian random variables
on a probability space $(\O, \mathcal F, P)$. 
Then, for $ p \geq 2$, we have 
\[ \bigg\| \sum_{n\in \Z}a_{n}g_{n}\bigg\|_{L^{p}(\O)} \les \sqrt{p}\| a_{n}\|_{\l^{2}_{n}}\]

\noi
for any sequence $\{a_n\}_{n \in \Z} \in \l^2_n(\Z; \C)$.
\end{lemma}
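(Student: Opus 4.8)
The plan is to exploit the Gaussian structure to collapse the whole sum into a single Gaussian random variable, after which the estimate becomes an elementary moment computation. First I would observe that, since $\sum_{n} |a_n|^2 < \infty$ by hypothesis, the series $S := \sum_{n} a_n g_n$ converges in $L^2(\O)$, and since the class of centered (circularly symmetric) complex Gaussians is closed under $L^2$-limits, the limit $S$ is itself a centered complex Gaussian random variable. (Equivalently, one may first reduce to finitely many nonzero $a_n$ and then pass to the limit.) By independence together with the standard normalization $\E|g_n|^2 = 1$ and $\E g_n^2 = 0$, its variance is
\[
\E|S|^2 = \sum_{n} |a_n|^2\, \E|g_n|^2 = \|a_n\|_{\l^2_n}^2 =: \s^2 .
\]

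By scaling invariance of the Gaussian law, $S$ has the same distribution as $\s Z$, where $Z$ is a standard circularly symmetric complex Gaussian with $\E|Z|^2 = 1$. Consequently
\[
\Big\| \sum_{n} a_n g_n \Big\|_{L^p(\O)} = \|S\|_{L^p(\O)} = \s\,\|Z\|_{L^p(\O)} = \|a_n\|_{\l^2_n}\,\|Z\|_{L^p(\O)},
\]
so the lemma reduces to the single-variable bound $\|Z\|_{L^p(\O)} \les \sqrt{p}$, uniformly in $p \geq 2$.

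For the latter I would compute the moments explicitly. Writing $Z = \tfrac{1}{\sqrt 2}(\xi + i\eta)$ with $\xi, \eta$ independent real standard Gaussians, the quantity $|Z|^2 = \tfrac12(\xi^2 + \eta^2)$ is exponentially distributed with mean one, whence
\[
\E|Z|^p = \E\big[(|Z|^2)^{p/2}\big] = \int_0^\infty t^{p/2} e^{-t}\,dt = \G\Big(\tfrac p2 + 1\Big).
\]
It then remains to show $\G(\tfrac p2 + 1)^{1/p} \les \sqrt{p}$. Using the elementary inequality $\G(x+1) \leq x^x$ for $x \geq 1$ (which follows from Stirling's formula, or directly from the fact that $x \mapsto x\log x - \log\G(x+1)$ is increasing on $[1,\infty)$ and vanishes at $x=1$), we obtain $\G(\tfrac p2 + 1) \leq (\tfrac p2)^{p/2}$ for $p \geq 2$, and therefore $\|Z\|_{L^p(\O)} = \G(\tfrac p2 + 1)^{1/p} \leq \sqrt{p/2} \leq \sqrt{p}$. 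Combining this with the previous display yields the claim.

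The argument is essentially routine, so I do not anticipate any genuine obstacle. The only two points requiring a modicum of care are the justification that the infinite sum defines a bona fide Gaussian (handled by $L^2$-convergence and closedness of the Gaussian class) and the extraction of the \emph{sharp} $\sqrt{p}$-growth of the constant, which is precisely where Stirling's formula enters; any cruder moment bound would lose the correct power of $p$.
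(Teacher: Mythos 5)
Your proof is correct and follows essentially the same route as the paper: both arguments identify $\sum_n a_n g_n$ as a single mean-zero complex Gaussian of variance $\|a_n\|_{\l^2_n}^2$ and then reduce to the single-variable bound $\|g\|_{L^p(\O)} \les \sqrt{p}\,\s$ via an explicit moment computation combined with Stirling's formula. The only cosmetic difference is that you compute $\E|Z|^p = \G(\tfrac{p}{2}+1)$ for all real $p \geq 2$ directly from the exponential law of $|Z|^2$, whereas the paper derives the even moments $\E[|g|^{2k}] = k!\,\s^{2k}$ from the moment generating function and (implicitly) interpolates for non-even $p$ by monotonicity of $L^p$-norms.
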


Recall that a mean-zero complex-valued  Gaussian
random variable $g$ with variance $\s^2$ satisfies\footnote{This identity
follows from a simple computation involving the moment generating function
for $|g|^2$.
Suppose that $\s^2 = 1$.
On the one hand, we have
\[\E[e^{t|g|^2}]= \E[e^{t(\Re g)^2}]\E[e^{t(\Im g)^2}] =\frac{1}{1-t}
= \sum_{j = 0}^\infty t^j\]

\noi 
\noi
for $|t| < 1$.
On the other hand, the Taylor expansion gives
\[ \E[e^{t|g|^2}] = \sum_{k = 0}^\infty \frac{t^k}{k!} \E[|g|^{2k}].\]

\noi
By comparing the coefficients, we obtain
$\E\big[|g|^{2k}\big] =k!$ for any $k \in \NB$, 
when $\s^2 = 1$.
}
$\E\big[|g|^{2k}\big] =k!\cdot \s^{2k}$ for any $k \in \NB$.
In particular, from Stirling's formula, we obtain
$\| g\|_{L^p(\O)} \les \sqrt{p} \cdot \s$ for any $p \geq 2$.
Then, Lemma \ref{LEM:gauss} follows easily once we observe that 
$\sum_{n\in \Z}a_{n}g_{n}$ is a mean-zero complex-valued  Gaussian variable with variance
$\| a_{n}\|_{\l^{2}_{n}}^2$.
See also 
\cite[Theorem I.22]{Simon}.

\begin{proof}[Proof of Lemma \ref{LEM:stoconv}] 
The proof follows along the lines in \cite{DDT04, Oh09b}.
From \eqref{S0}, we have
\begin{equation*}
\Psi(t)= \sum_{n\in \Z}e_{n}\sum_{k\in \Z} \widehat{(\phi e_{k})}(n)\int_{0}^{t}e^{i(t-t')|n|^{2}}d\beta_k(t'). 
\end{equation*}

\noi
Then, we have
\begin{align}
S(-t) \ind_{[0,T]}(t)\Psi(t)& = 
\sum_{n\in \Z}e_{n} \sum_{k\in \Z} 
\ft {(\phi e_{k})}(n)\ind_{[0,T]}(t)\int_{0}^{t}\ind_{[0,T]}(t')e^{-it'|n|^{2}}d\beta_k(t') \notag\\
 & = \sum_{n \in \Z}\psi_{n}(t)e_{n}, 
 \label{S2}
 \end{align}

 \noi
 where 
 \begin{align*}
 \psi_{n}(t):=\sum_{k\in \Z}\ft{(\phi e_k)}(n)\ind_{[0,T]}(t)\int_{0}^{t}\ind_{[0,T]}(t')e^{-it'|n|^{2}}d\beta_k(t').
 \end{align*}

\noi
By the stochastic Fubini theorem, we have
 \begin{align}
 \mathcal{F}_t(\psi_{n})(\tau)
 = \sum_{k\in \Z}\int_{0}^{T} e^{-it'|n|^{2}}\ft{(\phi e_{k})}(n)
 \bigg(\int_{t'}^T \ind_{[0,T]}(t)e^{-it\tau}dt\bigg) d\beta_k(t').
  \label{S2a}
  \end{align}

For fixed $T, \tau$ and $n$, the summands in \eqref{S2a} are mean-zero independent complex-valued 
Gaussian random variables. 
Then,  for fixed $T, \tau$ and $n$, 
it follows from Lemma \ref{LEM:gauss} and the properties of the Wiener integral that, 
for $\s \geq 2$,  
\begin{align}
\|\F_t (\psi_{n})(\tau)  \|_{L^{\s}(\O)} 
&\sim \s^{\frac{1}{2}} \left( \sum_{k\in \Z} \E \Bigg[ \bigg| \int_{0}^{T} e^{-it'|n|^{2}}
\ft{(\phi e_{k})}(n)\bigg(\int_{t'}^T \ind_{[0,T]}(t)e^{-it\tau}dt\bigg) d\beta_{k}(t') \bigg|^{2}  
\Bigg] \right)^{\frac{1}{2}} \notag \\
 & \sim \s^{\frac{1}{2}} \Bigg( \sum_{k\in \Z} 
 \int_{0}^{T} |\ft{(\phi e_{k})}(n)|^{2}  
 \bigg| \int_{t'}^T \ind_{[0,T]}(t)e^{-it\tau}dt \bigg|^{2}dt'  \Bigg)^{\frac{1}{2}} \notag \\
 & \les \s^{\frac{1}{2}} T^{\frac{1}{2}}\min (T,|\tau|^{-1})
 \bigg( \sum_{k\in \Z}   |\ft{(\phi e_{k})}(n)|^{2}    \bigg)^{\frac{1}{2}}.
 \label{S3}
 \end{align}

 \noi
 Hence, from 
 \eqref{Xsb2}, \eqref{S2}, and \eqref{S3} with Minkowski's integral inequality
 and \eqref{G1}, 
 we obtain 
\begin{align}
\big\| \|\ind_{[0,T]}(t)\Psi(t)\|_{X^{s,b}_{p,q}}\big\|_{L^{\s}(\O)} 
& = \big\| \jb{n}^{s}\jb{\tau}^{b}\|\F_t(\psi_{n})(\tau) \|_{L^{\s}(\O)} \big\|_{\ell^{p}_{n}L^{q}_{\tau}} \notag \\
& \les   \s^{\frac{1}{2}} T^{\frac{1}{2}} 
\| \jb{\tau}^{b} \min (T,|\tau|^{-1})\|_{L^{q}_{\tau}} 
\bigg\| \Big( \sum_{k\in \Z}   |\jb{n}^s\ft{(\phi e_{k})}(n)|^{2}    \Big)^{\frac{1}{2}}\bigg\|_{\l^p_n}
\notag \\
& \les \s^{\frac{1}{2}} 
T^{\frac{3}{2}-b- \frac 1q}
\|\phi\|_{\g(L^{2}; \FL^{s,p})}
\label{S4}
\end{align} 

\noi
for any $\s \geq \max(p, q)$, provided that $b < 1 - \frac 1q$.
Therefore, \eqref{S0a} follows from \eqref{S4} and 
 Chebyshev's inequality.
\end{proof}

Next, we show the continuity in time of the stochastic convolution. 
This is necessary as Lemma~\ref{LEM:stoconv}, while useful for the fixed point argument, does not  imply any continuity in time since $b<1-\frac{1}{q}$; see \eqref{conti}.

\begin{lemma}\label{LEM:stoconv2} 
Let  $s \in \R$ and  $1\leq p < \infty$.
Then, given  $\phi \in \g(L^{2}(\T); \FL^{s,p}(\T))$, 
we have $\Psi \in C(\R_+; \FL^{s, p}(\T))$ almost surely.

\end{lemma}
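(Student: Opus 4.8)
The plan is to prove continuity in time of $\Psi$ directly, working at the level of each Fourier mode via the explicit series representation \eqref{S0} and then upgrading to a pathwise statement by a Kolmogorov-type continuity criterion. The previous lemma (Lemma \ref{LEM:stoconv}) only gives membership in $X^{s,b}_{p,q;T}$ with $b < 1 - \frac 1q$, which is below the continuity threshold in \eqref{conti}, so a separate argument is genuinely needed. The key point is that although $\Psi$ is rough \emph{as a space-time distribution}, for each \emph{fixed} time $t$ the stochastic convolution is a perfectly nice $\FL^{s,p}$-valued Gaussian random variable, and the map $t \mapsto \Psi(t)$ is Hölder continuous in $t$ with values in $\FL^{s,p}(\T)$.

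First I would fix two times $t_1, t_2 \in \R_+$ and estimate the increment $\Psi(t_2) - \Psi(t_1)$ in $\FL^{s,p}$. Using \eqref{S0}, the $n$-th Fourier coefficient of $\Psi(t)$ is
\[
\ft{\Psi(t)}(n) = \sum_{k \in \Z} \ft{(\phi e_k)}(n) \int_0^t e^{i(t-t')|n|^2} d\beta_k(t'),
\]
so the increment splits into two contributions: the difference $e^{it_2 |n|^2} - e^{it_1|n|^2}$ acting on the integral up to $\min(t_1,t_2)$, plus a fresh Wiener integral over the interval $[t_1, t_2]$. For fixed $n$, both pieces are mean-zero complex Gaussians, and by the Itô isometry together with $|e^{it_2|n|^2} - e^{it_1|n|^2}| \lesssim \min(1, |n|^2 |t_2 - t_1|)$, the variance of $\ft{\Psi(t_2)}(n) - \ft{\Psi(t_1)}(n)$ is bounded by
\[
C \, |t_2 - t_1|^{2\theta} \jb{n}^{4\theta} \sum_{k \in \Z} |\ft{(\phi e_k)}(n)|^2
\]
for any $\theta \in (0, \tfrac12]$, after interpolating between the two obvious bounds. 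I would then apply Lemma \ref{LEM:gauss} mode-by-mode and take the $\FL^{s,p}$-norm, using Minkowski's inequality to pass the $L^\sigma(\O)$-norm inside the $\l^p_n$-norm, exactly as in the passage from \eqref{S3} to \eqref{S4}. This yields, for all $\sigma \geq p$,
\[
\big\| \|\Psi(t_2) - \Psi(t_1)\|_{\FL^{s,p}} \big\|_{L^\sigma(\O)} \lesssim \sigma^{\frac12} |t_2 - t_1|^\theta \, \|\phi\|_{\g(L^2;\, \FL^{s,p+})},
\]
provided the extra $\jb{n}^{2\theta}$ weight is absorbable; here one must choose $\theta$ small enough that $\phi$ still defines a $\g$-radonifying operator into the slightly higher regularity space $\FL^{s+2\theta, p}$, which is available since the hypothesis $\phi \in \g(L^2; \FL^{s,p})$ holds for the fixed $s$ and a small loss $2\theta$ can be accommodated by a marginally stronger assumption or, more cleanly, by noting that the bound only needs finiteness of $\sum_k |\jb{n}^{s+2\theta}\ft{(\phi e_k)}(n)|^2$ summed in $\l^p_n$.

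To conclude pathwise continuity I would invoke the Kolmogorov continuity theorem for Banach-space-valued processes: having established $\E\big[\|\Psi(t_2) - \Psi(t_1)\|_{\FL^{s,p}}^\sigma\big] \lesssim |t_2 - t_1|^{\theta \sigma}$ for all large $\sigma$, we get $\theta \sigma > 1$ for $\sigma$ large, so $\Psi$ admits a modification with almost surely $\FL^{s,p}(\T)$-valued continuous paths on any compact time interval, hence on all of $\R_+$. The \textbf{main obstacle} I anticipate is the regularity bookkeeping around the factor $\jb{n}^{2\theta}$: the naïve increment bound in time costs spatial derivatives (the phase $e^{it|n|^2}$ oscillates faster at high frequency), and one must verify that this loss is harmless given only $\phi \in \g(L^2; \FL^{s,p})$. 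This is resolved by the flexibility in choosing $\theta$ arbitrarily small — any $\theta > 0$ suffices for Kolmogorov once $\sigma$ is taken large — so the spatial loss $2\theta$ can be made negligible; one checks that the defining hypothesis on $\phi$, being stated for the exact target space $\FL^{s,p}$, is compatible because the high-frequency weight is controlled uniformly after the $\min(T, |\tau|^{-1})$-type integration in the fixed-time variance, which involves no $\tau$ at all and hence no borderline summability issue.
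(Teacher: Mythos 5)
There is a genuine gap, and it sits exactly at the point you flagged as the ``main obstacle'': the loss of $\jb{n}^{2\theta}$ in your increment bound cannot be absorbed under the stated hypothesis, no matter how small $\theta>0$ is taken. The assumption $\phi \in \g(L^{2};\FL^{s,p})$ is made exactly at the target regularity $s$ and gives no room above it: for the diagonal operator $\phi(e_k)=a_k e_k$ with $\jb{k}^{s}a_k = \jb{k}^{-1/p}\big(\log(2+|k|)\big)^{-2/p}$ one has $\phi\in\g(L^2;\FL^{s,p})$, yet $\phi\notin\g(L^2;\FL^{s+2\theta,p})$ for \emph{every} $\theta>0$, so the quantity $\|\phi\|_{\g(L^2;\,\FL^{s+2\theta,p})}$ appearing in your moment bound may simply be infinite. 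Moreover, the defect is not an artifact of your estimates: the increment $\Psi(t_2)-\Psi(t_1)$ genuinely contains the term $(S(t_2-t_1)-\Id)\Psi(t_1)$, and since $S(t)$ is only strongly continuous (not H\"older continuous) on $\FL^{s,p}$, the process $\Psi$ itself is not H\"older continuous in time with values in $\FL^{s,p}$; hence no Kolmogorov-type bound $\E\big[\|\Psi(t_2)-\Psi(t_1)\|_{\FL^{s,p}}^{\sigma}\big]\lesssim |t_2-t_1|^{\theta\sigma}$ free of spatial loss can hold for general $\phi$. Your closing claim that the weight is ``controlled uniformly after the $\min(T,|\tau|^{-1})$-type integration'' does not parse: there is no $\tau$-integration in this fixed-time estimate, and no integration in time can compensate an unbounded weight in $n$.

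The repair is the paper's device: apply Kolmogorov not to $\Psi$ but to its interaction representation $\wt\Psi(t):=S(-t)\Psi(t)$, which has the same $\FL^{s,p}$-norm at each fixed time. Its $n$-th Fourier coefficient is $\sum_k \ft{(\phi e_k)}(n)\int_0^t e^{-it'|n|^2}d\beta_k(t')$, so the increment over $[t_1,t_2]$ is a single fresh Wiener integral with unimodular integrand, and the It\^o isometry gives variance exactly $(t_2-t_1)\sum_k|\ft{(\phi e_k)}(n)|^2$ --- H\"older exponent $\frac12$ in time, zero loss in $n$. Your mode-by-mode use of Lemma \ref{LEM:gauss}, Minkowski's inequality, and the Kolmogorov criterion then goes through verbatim for $\wt\Psi$, yielding $\wt\Psi \in C(\R_+;\FL^{s,p}(\T))$ almost surely, and one concludes for $\Psi(t)=S(t)\wt\Psi(t)$ by the strong continuity of the group $S(t)$ on $\FL^{s,p}(\T)$ for $p<\infty$. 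In short, the phase $e^{i(t-t')|n|^2}$ that costs you derivatives must never be differenced in $t$; it should be factored out before taking moments.
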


\begin{proof}
Let $\wt{\Psi}(t):=S(-t)\Psi(t).$ 
Note that $\|\wt{\Psi}(t)\|_{\FL^{s,p}}=\|\Psi(t)\|_{\FL^{s,p}}$ for all $t \geq 0$. 
Then for $t_{2}>t_{1} \geq0$ and $\s\geq p$, Lemma \ref{LEM:gauss} and \eqref{G1} imply
 \begin{align*}
\E \Big[ \|\wt{\Psi}(t_{2})-\wt{\Psi}(t_{1})\|_{\FL^{s,p}}^{\s} \Big] 
& \les \Bigg(\sum_{n\in \Z}\jb{n}^{sp} \bigg \|\sum_{k\in \Z} \ft{(\phi e_{k})}(n)
\int_{t_{1}}^{t_{2}}e^{-it'|n|^{2}}d\beta_{k}(t')  \bigg\|_{L^{\s}(\O)}^{p}\Bigg)^{\frac{\s}{p}} \\
& \sim \s^{\frac{\s}{2}} \Bigg(\sum_{n\in \Z}\jb{n}^{sp} 
\bigg(\sum_{k\in \Z} |\ft{(\phi e_{k})}(n)|^{2} \int_{t_{1}}^{t_{2}} 1\,  dt'  \bigg)^{\frac{p}{2}}\Bigg)^{\frac{\s}{p}} \\
& \les \s^{\frac{\s}{2}}\|\phi\|_{\g(L^{2}; \FL^{s,p})}^{\s}|t_{2}-t_{1}|^{\frac{\s}{2}}.
\end{align*}

\noi
By applying Kolmogorov's continuity criterion 
 (\cite[Exercise 8.2]{Bass}), 
we see that   $\wt{\Psi}\in C(\R; \FL^{s,p}(\T))$ almost surely. 
 By continuity of the semigroup $S(t)$ on $\FL^{s,p}(\T)$ for $1 \leq p < \infty$, 
 we conclude that $\Psi(t)\in C(\R_+; \FL^{s,p}(\T))$ almost surely. 
\end{proof}

\section{Nonlinear estimates} \label{SEC:nonlin}

In this section, we establish  crucial nonlinear estimates
for proving Theorem \ref{THM:1}.
First, write  the Wick ordered nonlinearity in \eqref{SNLS2}
as follows:
\begin{align*}
\N(u) & = \bigg(|u|^2- 2 \int_\T |u|^2 dx\bigg)u \\
& = 
\sum_{n\in \Z} e^{inx}\sum_{\substack{n=n_1-n_2+n_3 \\ n \ne n_{1},n_{3}}} 
\ft u(n_{1})\cj{\ft u(n_{2})}\ft u(n_{3}) 
 -\sum_{n\in \Z} e^{inx}|\ft u(n)|^2\ft u(n)\\
 & =: \N_1(u) + \N_2(u).
\end{align*}

\noi
Here, $\N_1(u)$ and $\N_2(u)$ correspond to the non-resonant and resonant parts
of the nonlinearity, respectively.
By viewing $\N_1(u)$ and $\N_2(u)$ as trilinear operators, we write
\begin{align} 
\N_1(u_{1},u_{2},u_{3})& := 
\sum_{n\in \Z} e^{inx}\sum_{\substack{n=n_{1}-n_{2}+n_{3} \\ n \ne n_{1},n_{3}}} \ft u_{1}(n_{1})\overline{\ft u_{2}(n_{2})}\ft u_{3}(n_{3}), 
\label{nonres}\\
\N_2(u_{1},u_{2},u_{3})
& := -\sum_{n\in \Z} e^{inx}\ft u_{1}(n)\overline{\ft u_{2}(n)}\ft u_{3}(n).
\label{res}
\end{align}

\noi
Then, we have the following nonlinear estimates.

\begin{proposition} \label{PROP:nonlin} 
Let $s> 0$ and $1 < p < \infty$.
Then, there exist  $b, b' \in \R$ with  $-\frac{1}{p} < b' <0<b<1-\frac{1}{p}$
such that 
\begin{equation}
\|\N_1 (u_{1},u_{2},u_{3})\|_{X^{s,b'}_{p;T}}+ \|\N_2 (u_{1},u_{2},u_{3})\|_{X^{s,b'}_{p;T}}
\les \prod_{j=1}^{3}\|u_{j}\|_{X^{s,b}_{p;T}}.
\label{nonlin}
\end{equation}

\end{proposition}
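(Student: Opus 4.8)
The plan is to prove both estimates by the Fourier restriction norm method adapted to $\FL^{s,p}$, working on the Fourier side and exploiting the modulation structure. Writing $\sigma = \tau - n^2$ and $\sigma_j = \tau_j - n_j^2$, the frequency relations $n = n_1 - n_2 + n_3$ and $\tau = \tau_1 - \tau_2 + \tau_3$ force the algebraic identity $\sigma - \sigma_1 + \sigma_2 - \sigma_3 = -2(n-n_1)(n-n_3)$, so that $\max(\jb\sigma, \jb{\sigma_1}, \jb{\sigma_2}, \jb{\sigma_3}) \ges \jb{(n-n_1)(n-n_3)}$; on the non-resonant set $n \ne n_1, n_3$ of \eqref{nonres} this quantity is $\geq 1$ and provides a genuine gain. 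I would first record two elementary reductions valid on the support of both nonlinearities: since $s > 0$ and $\jb n \les \max_j \jb{n_j}$, one has $\frac{\jb n^s}{\jb{n_1}^s \jb{n_2}^s \jb{n_3}^s} \les (\text{product of the two smallest }\jb{n_j})^{-s} \les 1$, and since $b > 0 > b'$ the factors $\jb\sigma^{b'}$ and $\jb{\sigma_j}^{-b}$ are all bounded by $1$. It is convenient to dualize: setting $f_j = \jb{n_j}^s \jb{\sigma_j}^b \ft u_j$, so that $\|f_j\|_{\ell^p_n L^p_\tau} = \|u_j\|_{X^{s,b}_p}$ by \eqref{Xsb2}, and testing against $h$ with $\|h\|_{\ell^{p'}_n L^{p'}_\tau} \le 1$, both estimates reduce to bounding a single multilinear form whose multiplier is $\frac{\jb n^s \jb\sigma^{b'}}{\jb{n_1}^s\jb{n_2}^s\jb{n_3}^s \jb{\sigma_1}^b\jb{\sigma_2}^b\jb{\sigma_3}^b}$.

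I would treat the resonant piece $\N_2$ in \eqref{res} first. The frequency constraint there forces $n_1 = n_2 = n_3 = n$, so there is no spatial summation and no modulation gain, but the spatial weight collapses to the harmless factor $\jb n^{-2s} \le 1$ and $\sigma = \sigma_1 - \sigma_2 + \sigma_3$. For each fixed $n$ the claim becomes a one-dimensional convolution estimate in $\tau$: bounding $\jb\sigma^{b'}$ by $1$ and applying Young's inequality $L^{p_1} * L^{p_2} * L^{p_3} \to L^p$ together with Hölder to pass $\|\jb{\sigma_j}^{-b}\,\cdot\,\|_{L^{p_j}} \les \|\cdot\|_{L^p}$ (which requires $\jb{\sigma_j}^{-b} \in L^{r_j}$, i.e. $b r_j > 1$) closes the $L^p_\tau$ bound provided $2 - \tfrac2p < 3b$, that is $b > \tfrac23(1 - \tfrac1p)$, and this is compatible with $b < 1 - \tfrac1p$ for every $p > 1$. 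Summing the resulting pointwise-in-$n$ bound over $n$ by Hölder in $\ell^p_n$ then yields $\|\N_2\|_{X^{s,b'}_p} \les \prod_j \|u_j\|_{X^{s,b}_p}$.

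For the non-resonant piece $\N_1$ I would split into cases according to which of the four modulations realizes the maximum $L$. In each case I peel a small power $L^{-\theta} \les |(n-n_1)(n-n_3)|^{-\theta} = |n-n_1|^{-\theta}\,|n-n_3|^{-\theta}$ off the dominant weight: when $L = \jb\sigma$ the output weight $\jb\sigma^{b'}$ itself supplies this with $\theta \le |b'|$, while when $L = \jb{\sigma_j}$ a fraction of $\jb{\sigma_j}^{-b}$ supplies it with $\theta$ up to (but strictly below) $b$. The remaining modulation weights are then spent on the temporal integration exactly as for $\N_2$ (Young plus Hölder in the $\tau$-variables, using $b < 1 - \tfrac1p$), after which one is left with a purely spatial sum over the lattice $\{(n_1,n_2)\}$ of the product of the extracted decay $|n-n_1|^{-\theta}|n-n_3|^{-\theta}$ (which decays in $n_1$ and, via $n - n_3 = n_1 - n_2$, in $n_2$), the factor $(\text{two smallest }\jb{n_j})^{-s}$, and the $\ell^p$/$\ell^{p'}$ weights coming from $f_1, f_2, f_3$ and $h$; this sum is controlled by Hölder/Young in $(n_1,n_2)$ and finally in $n$.

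The main obstacle is the budgeting of the dominant modulation's exponent between the temporal and the spatial estimates. Because the temporal regularity is forced to satisfy $b < 1 - \tfrac1p$ — strictly below the value $b > 1 - \tfrac1p$ available to Gr\"unrock--Herr \cite{GH} — the Young/Hölder step in $\tau$ is tight and leaves little room to extract $L^{-\theta}$, so $\theta$ must be taken small; the convergence of the spatial lattice sum must then be secured by the extra decay $(\text{two smallest})^{-s}$, which is precisely why the hypothesis $s > 0$ enters. Concretely, I would choose $b$ close to $1 - \tfrac1p$ and $b'$ negative (close to $-\tfrac1p$, so that the case $L = \jb\sigma$ also carries usable decay) and verify that the finitely many case-by-case inequalities relating $s$, $b$, $b'$ and $p$ can be met simultaneously within the admissible range $-\tfrac1p < b' < 0 < b < 1 - \tfrac1p$. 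Checking that these arithmetic conditions are compatible, rather than any single hard estimate, is where the real work lies.
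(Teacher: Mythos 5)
Your treatment of the resonant part $\N_2$ in \eqref{res} matches the paper's (Young plus H\"older in $\tau$, with the same threshold $b > \frac{2}{3}(1-\frac1p)$), and your reduction of the non-resonant part \eqref{nonres} to a multiplier bound via $\ell^p/\ell^{p'}$ duality is also the paper's starting point. The genuine gap is in your budgeting for $\N_1$, specifically in the case where the \emph{output} modulation is dominant. There the only weight attached to the dominant modulation is $\jb{\sigma}^{b'}$, so the decay you can peel off is capped at $\theta \le |b'| < \frac1p$. Since you spend all the remaining weights $\jb{\sigma_j}^{-b}$ on the temporal integration ``exactly as for $\N_2$,'' producing constants, what is left is a two-dimensional lattice sum whose kernel is $|n-n_1|^{-\theta p'}|n-n_3|^{-\theta p'}$ with $\theta p' < \frac{p'}{p} = \frac{1}{p-1}$, supplemented only by the $s$-decay. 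Convergence of that sum requires (essentially) $(\theta + s)p' > 1$ per direction, i.e. $s > \frac{1}{p'} - \frac1p = 1 - \frac2p$. For $p > 2$ and $0 < s \le 1 - \frac2p$ --- exactly the regime the proposition must cover, large $p$ and small $s$ --- this is impossible for any admissible $b, b'$, so the ``arithmetic compatibility check'' you defer to the end genuinely fails in this case. (Your suggestion to take $b'$ close to $-\frac1p$ maximizes $\theta$ here but still cannot reach the threshold.)

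The paper's proof closes precisely because it never reduces the $\tau$-integrations to constants. In place of a max-case analysis it uses the product bound $\jb{n-n_1}\jb{n-n_3} \les \prod_{j=0}^3 \jb{\sigma_j}$, so that no single weight is asked to supply more than the small exponent $a = -b'$, and then carries out the $d\tau_1 d\tau_2$ integration with Lemma \ref{LEM:sum}, which \emph{retains} the resonance-dependent factor $\jb{\sigma_0 + 2(n-n_1)(n-n_3)}^{-(3(b-a)p'-2)}$, whose exponent is close to $1$ when $b$ is close to $\frac{1}{p'}$ and $a$ is close to $0$. This factor --- not the extracted $\jb{h}^{-ap'}$ --- is the real source of decay in $h = (n-n_1)(n-n_3)$. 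Two further ingredients, absent from your plan, are then essential: the divisor bound \eqref{divisor}, which collapses the two-dimensional frequency sum to a one-dimensional sum over $h = k_1 k_2$ with multiplicity $O(|h|^{\ta})$; and a H\"older step with exponent $q$ close to $1$, which converts the hypothesis $s>0$ (through the condition $sp'q' > 1$) into the final push of the $h$-sum's total exponent above $1$. Note that this mechanism forces $b'$ to be taken close to $0$, the opposite of your choice: with $a$ close to $\frac1p$ the exponent $3bp' - 2ap' - 2$ falls well below $1$ and small $s$ can no longer rescue the sum.
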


Gr\"unrock-Herr \cite{GH} proved an analogous trilinear estimate 
but with  higher temporal regularity $b > 1 - \frac{1}{p}$ (and $s = 0$).
See Proposition 1.3 in \cite{GH}.
In our case, the temporal regularity of the stochastic convolution (Lemma \ref{LEM:stoconv})
forces us to work with lower temporal regularity: $b < 1 - \frac 1p$.
We compensate this loss of temporal regularity by assuming
a slightly higher spatial regularity $s > 0$.

Before proceeding to the proof of Proposition \ref{PROP:nonlin}, recall the following
elementary calculus lemma.  See, for example, Lemma 4.2 in \cite{GTV}.

\begin{lemma}\label{LEM:sum} 
Let $\be\geq \g \geq 0$ and $\be+\g >1$. 
Then, we have
\begin{align*}
 \int_{\R}   \frac{1}{\jb{x-a_1}^{\beta}\jb{x-a_2}^{\g}} 
&  \les \frac{1}{\jb{a_1-a_2}^{\al}}, \\
\sum_{n\in \Z}\frac{1}{\jb{n-k_{1}}^{\beta}\jb{n-k_{2}}^{\g}}
& \les \frac{1}{\jb{k_{1}-k_{2}}^{\al}}, 
\end{align*}

\noi
where $\al$ is given by 
\[ \al=
\begin{cases} 
\g, &  \beta>1, \\ 
\g-\eps, &  \beta=1, \\ 
\be+\g-1, &  \beta<1
\end{cases}
\]

\noi
for any $\eps > 0$.

\end{lemma}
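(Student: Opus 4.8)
The plan is to prove both estimates by the same elementary region decomposition, so I focus on the integral; the sum will follow verbatim from the discrete analogues of the one-variable bounds below. Write $d := \jb{a_1 - a_2}$; the goal is to show the integral is $\les d^{-\al}$. Since only $d$ matters, I decompose $\R$ according to the proximity of the integration variable to $a_1$, to $a_2$, or to neither:
\[
R_1 = \{|x - a_1| \leq \tfrac12|a_1 - a_2|\}, \quad
R_2 = \{|x - a_2| \leq \tfrac12|a_1-a_2|\}, \quad
R_3 = \R \setminus (R_1 \cup R_2),
\]
and bound the contribution of each region separately.

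The decomposition is designed so that on each region one of the two weights is essentially constant, reducing matters to a single one-variable estimate. Precisely, by the triangle inequality: on $R_1$ one has $|x - a_2| \geq \tfrac12|a_1 - a_2|$, hence $\jb{x-a_2} \sim d$; on $R_2$ symmetrically $\jb{x - a_1} \sim d$; and on $R_3$, where $\min(|x-a_1|, |x-a_2|) > \tfrac12|a_1-a_2|$, the two distances differ by at most $|a_1-a_2|$ and are therefore comparable, so $\jb{x-a_1} \sim \jb{x-a_2}$. The only other ingredients are the elementary one-variable bounds
\[
\int_{|y| \leq R} \frac{dy}{\jb{y}^{\be}}
\les
\begin{cases} 1, & \be > 1,\\ \log(2+R), & \be = 1, \\ \jb{R}^{1-\be}, & \be < 1, \end{cases}
\qquad
\int_{|y| > R} \frac{dy}{\jb{y}^{\be+\g}} \les \jb{R}^{1-\be-\g},
\]
where the tail bound uses $\be + \g > 1$.

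Assembling these: on $R_1$, pulling out $\jb{x-a_2}^{-\g} \sim d^{-\g}$ and integrating $\jb{x-a_1}^{-\be}$ over $|x - a_1| \leq \tfrac12|a_1-a_2|$ produces exactly $d^{-\g}$, then $d^{-\g}\log(2+d) \les_\eps d^{-(\g - \eps)}$, then $d^{-\g}\cdot d^{1-\be} = d^{-(\be+\g-1)}$ in the cases $\be > 1$, $\be = 1$, $\be < 1$ respectively, i.e.\ precisely $d^{-\al}$ in each case. On $R_3$, using $\jb{x-a_1}\sim\jb{x-a_2}$ together with the tail bound over $|x-a_1| > \tfrac12|a_1-a_2|$ gives $d^{-(\be+\g-1)}$, which is $\leq d^{-\al}$ in every case since $\be + \g - 1 \geq \al$. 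On $R_2$ one gets $d^{-\be}$ times the $\g$-version of the truncated integral; here the hypothesis $\be \geq \g$ is exactly what guarantees this contribution is again dominated by $d^{-\al}$. Summing the three regional bounds yields the claim.

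The work is entirely bookkeeping across the three regions and the three cases defining $\al$, and I do not expect a genuine obstacle. The only delicate points are the borderline $\be = 1$, where the logarithmic divergence of the truncated integral forces the arbitrarily small loss $\g \mapsto \g - \eps$, and the need to verify that the assumption $\be \geq \g$ keeps the $R_2$ contribution from ever beating the $R_1$ one. For the sum, one replaces each integral by the corresponding sum over $\Z$ (with $n, k_1, k_2$ in place of $x, a_1, a_2$) and invokes the discrete versions of the two displayed bounds, obtained by comparison with the integrals; the comparability estimates on $R_1$, $R_2$, $R_3$ are unchanged.
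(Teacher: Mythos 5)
Your proof is correct. Note that the paper itself gives no proof of this lemma --- it simply cites Lemma 4.2 of \cite{GTV} --- and your three-region decomposition ($R_1$ near $a_1$, $R_2$ near $a_2$, $R_3$ where $\jb{x-a_1}\sim\jb{x-a_2}$) is exactly the standard argument behind that reference. All the delicate points check out: on $R_1$ the truncated integral of $\jb{y}^{-\be}$ yields the three cases $1$, $\log(2+d)$, $d^{1-\be}$ matching the definition of $\al$ (with the logarithm absorbed into the $\eps$-loss when $\be=1$); on $R_2$ the contribution is $d^{-\be}$ times the $\g$-truncated integral, and a case check confirms that $\be\geq\g$ makes this dominated by $d^{-\al}$ in every case (e.g.\ $d^{-\be}\leq d^{-\g}$ when $\g>1$, and $d^{1-\be-\g}\leq d^{-\al}$ when $\g<1$); on $R_3$ the tail bound uses precisely $\be+\g>1$ and gives $d^{-(\be+\g-1)}\leq d^{-\al}$ since $\be+\g-1\geq\al$ in all three cases. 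The transfer to the discrete sum by comparison with the integral is unproblematic since $\jb{\cdot}^{-\be}$ is monotone in the distance, and nothing in the argument requires $k_1,k_2$ to be integers --- which matters, as the paper applies the discrete bound with a real shift $\s_0$ in \eqref{N9}.
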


Recall also the following arithmetic fact \cite{HW}.
Given $n \in \NB$, the number $d(n)$ of the divisors of $n$
satisfies
\begin{align}
d(n) \leq C_\dl \, n^\dl
\label{divisor}
\end{align}

\noi
for any $\dl > 0$.

\begin{proof}
Let $\wt{u}_{j}$ be an extension of $u_{j}$, $j = 1, 2, 3$.
Then, it suffices to prove 
\begin{align}
\|\N_k(\wt{u}_1,\wt{u}_2,\wt{u}_3)\|_{X^{s,b'}_{p}}\les  \prod_{j=1}^{3}\|\wt{u}_{j}\|_{X^{s,b}_{p}}
\label{N1}
\end{align}

\noi
for $k = 1, 2$,
since the desired estimate \eqref{nonlin} 
then follows from  taking an infimum over all  extensions. 
For simplicity, we denote $\wt u_j$ by $u_j$ in the following.

We first estimate the non-resonant part $\N_1(u_1, u_2, u_3)$.
Let $f_{j}(\tau, n):=\jb{n}^{s}\jb{\tau-n^{2}}^{b}|\ft u_{j}(\tau, n)|$ for $j=1,2,3$.
By noting that 
 $\|f_{j}\|_{\l_{n}^{p}L^{p}_{\tau}}  =\|u_{j}\|_{X^{s,b}_{p}}$, 
 we see that \eqref{N1} follows once we prove 
\begin{equation} 
\bigg\| \frac{\jb{n}^{s}}{\jb{\s_{0}}^{a}} 
\sum_{\substack{n=n_{1}-n_{2}+n_{3} \\ n \ne n_{1},n_{3}}} 
\intt_{\tau=\tau_{1}-\tau_{2}+\tau_{3}} 
\prod_{j=1}^{3} \frac{f_{j}(\tau_{j}, n_j)}{\jb{n_{j}}^{s}\jb{\s_{j}}^{b}} 
d\tau_{1}d\tau_{2} \bigg\|_{\l^{p}_{n}L^{p}_{\tau}}
\les \prod_{j=1}^{3} \| f_{j}\|_{\ell^{p}_{n}L^{p}_{\tau}},
\label{N2}
\end{equation}

\noi
where $a: =-b'>0$,  $\s_{0}:=\tau-n^{2}$,  and $\s_{j}:=\tau_{j}-n_{j}^{2}$, $j=1,2,3$.
By H\"older's inequality, we have
\begin{align*}
\text{LHS of }\eqref{N2} \leq \Big(\sup_{n, \tau} M_{n, \tau} \Big)^\frac{1}{p'} 
\prod_{j=1}^{3} \| f_{j}\|_{\ell^{p}_{n}L^{p}_{\tau}}, 
\end{align*}

\noi
where $M_{n, \tau}$ is defined by 
\[M_{n, \tau} := \frac{\jb{n}^{sp'}}{\jb{\s_{0}}^{ap'}} 
\sum_{\substack{n=n_{1}-n_{2}+n_{3} \\ n\ne n_{1},n_{3}}} \intt_{\tau=\tau_{1}-\tau_{2}+\tau_{3}} \prod_{j=1}^{3} \frac{1}{\jb{n_{j}}^{sp'} \jb{\s_{j}}^{bp'}} \, d\tau_{1}d\tau_{2}.
\]

\noi
Hence, it suffices to show that 
\begin{align*}
\sup_{n, \tau} M_{n, \tau}  < \infty.
\end{align*}

By the triangle inequality, we have 
 \begin{align} \label{N4}
\jb{n-n_{1}}\jb{n-n_{3}} 
&\sim \jb{n^{2}-n_{1}^{2}+n_{2}^{2}-n_{3}^{2}}
 \leq \sum_{j=0}^{3} \jb{\s_{j}} 
 \leq \prod_{j=0}^{3}\jb{\s_{j}}
\end{align} 

\noi
under $n=n_{1}-n_{2}+n_{3}$ and $\tau=\tau_{1}-\tau_{2}+\tau_{3}$.
By symmetry, assume  $|n_{1}|\geq |n_{3}|$. 
By the triangle inequality once again, we have
\begin{align}
 \frac{\jb{n}}{\jb{n_{1}}\jb{n_{2}}\jb{n_{3}}} \les \frac{1}{\jb{n_{1}}\jb{n_{3}}} \qquad \text{or} \qquad \les \frac{1}{\jb{n_{2}}\jb{n_{3}}}.
 \label{N5}
\end{align}

\noi
We  assume that the latter holds.
 The situation for the former is essentially identical to what we detail for the latter.
From \eqref{N4} and \eqref{N5}, we have
\begin{align*}
 M_{n, \tau} 
 & \les \sum_{\substack{n=n_{1}-n_{2}+n_{3} \\ n \ne n_{1},n_{3}}} 
 \frac{1}{\jb{n_{2}}^{sp'}\jb{n_{3}}^{sp'}} \frac{1}{\jb{n-n_{1}}^{ap'}\jb{n-n_{3}}^{ap'}} \notag\\
& \hphantom{X} 
\times \intt_{\tau=\tau_{1}-\tau_{2}+\tau_{3}} \prod_{j=1}^{3} \frac{1}{\jb{\s_{j}}^{(b-a)p'}}  d\tau_{1}d\tau_{2}.
\end{align*}

By choosing $b < \frac{1}{p'}$ sufficiently close to 
$\frac 1{p'}$ 
and $-\frac{1}{p} < b' < 0$ sufficiently close to 0, we have 
 \begin{align}
\frac 23 < (b-a)p' = (b+b') p'  < 1.
 \label{N6}
 \end{align}

\noi
Then,  applying  Lemma \ref{LEM:sum} twice, we have
\begin{align*} 
M_{n, \tau}
 &\les \sum_{\substack{n=n_{1}-n_{2}+n_{3} \\ n \ne n_{1},n_{3}}} 
 \frac{1}{\jb{n_{2}}^{sp'}\jb{n_{3}}^{sp'}} \frac{1}{\jb{n-n_{1}}^{ap'}\jb{n-n_{3}}^{ap'}}\\
 & \hphantom{XXX}
\times 
 \frac{1}{\jb{\s_{0}+2(n-n_{1})(n-n_{3})}^{3(b-a)p'-2}}.
\end{align*}

\noi
Fix $q \in [1, \infty]$ to be chosen later.
Then, by H\"{o}lder's inequality, we have
\begin{align*} 
M_{n, \tau}
& \les \bigg( \sum_{n_{2},n_{3}}\frac{1}{\jb{n_{2}}^{sp'q'}\jb{n_{3}}^{sp'q'}}\bigg)^{\frac{1}{q'}} 
\bigg( \sum_{\substack{n_{2},n_{3}\\ n_3 \ne n_2, n}}\frac{1}{\jb{n_{2}-n_{3}}^{ap'q}\jb{n-n_{3}}^{ap'q}}
\notag \\
 & \hphantom{XXXXXXXXXXX} 
\times 
\frac{1}{\jb{\s_{0}-2(n_{2}-n_{3})(n-n_{3})}^{(3bp'-3ap'-2)q}} \bigg)^{\frac{1}{q}}.
\end{align*}

\noi
The first factor on the right-hand side is finite, provided that 
\begin{align}
sp'q'>1.
\label{N8}
\end{align} 

\noi
As for the second factor, we set $k_{1}=n_{2}-n_{3}$, $k_{2}=n-n_{3}$ and $h=k_{1}k_{2}$.
By the divisor estimate \eqref{divisor}, 
we have
\begin{align}
\sum_{h\neq 0} \frac{1}{\jb{\s_{0}-2h}^{(3bp'-3ap'-2)q}}\frac{1}{\jb{h}^{ap'q}}\sum_{\substack{k_{1},k_{2}\neq 0\\ h=k_{1}k_{2} }} 1 \les \sum_{h\neq 0} \frac{1}{\jb{\s_{0}-2h}^{(3bp'-3ap'-2)q}}\frac{1}{\jb{h}^{(ap'-\ta)q}},
\label{N9}
\end{align}

\noi
for any  $\ta>0$. 
By applying Lemma \ref{LEM:sum} with \eqref{N6}, we see that the sum \eqref{N9} is finite
with a bound independent of $\s_0 \in \R$, provided that 
\begin{equation}
(3bp'-2ap'-2-\ta)q>1.
\label{N10}
\end{equation}

\noi
Putting \eqref{N8} and \eqref{N10} together, we obtain the restriction:
\begin{equation*}
3bp'-2ap'-2-\ta >1 - sp', 
\end{equation*}

\noi
which holds true for any $s > 0$ and $1<  p < \infty$ by choosing 
(i)  $b < \frac{1}{p'}$ sufficiently close to 
$\frac 1{p'}$, (ii)  $-\frac{1}{p} < b' = -a< 0$ sufficiently close to 0, 
and (iii) $\ta > 0$ sufficiently small.

\medskip

Next, we consider the resonant part $\N_2(u_1, u_2, u_3)$.
In this case, we prove the desired estimate with $s = 0$ and $b' = 0$.
A general case $s>0$ then follows from the triangle inequality: $\jb{n}^s \les \jb{n_1}^s\jb{n_2}^s\jb{n_3}^s$
for $ s\geq 0$. By Young's inequality, 
H\"older's inequality \big($\frac{2p+1}{3p} = \frac{1}{p} + \frac{2p-2}{3p}$\big), 
and $\l^p_n \subset \l^{3p}_n$, we have
\begin{align*}
\|\N_2(u_1,u_2,u_3)\|_{X^{0,0}_{p}}
& \leq \bigg\| \prod_{j=1}^{3} \| \ft u_j(t, n) \|_{\FL_t^{\frac{3p}{2p+1}}}\bigg\|_{\ell^{p}_{n}}
\les \bigg\| \prod_{j=1}^{3} \| \ft u_j(t, n) \|_{\FL_t^{b, p}}\bigg\|_{\ell^{p}_{n}}\\%
& \les  \prod_{j=1}^{3}\|u_{j}\|_{X^{0,b}_{p}},
\end{align*}

\noi
provided that $b > \frac{2p-2}{3p}$.
The last condition obviously holds  by taking $b < \frac 1{p'}$ sufficiently close to $\frac 1{p'}$
as long as $p > 1$.
\end{proof}

\section{Proof of Theorem \ref{THM:1}}
\label{SEC:THM}

In this section, we present the proof of Theorem \ref{THM:1}.
Given $s>0$ and  $1< p<\infty$,
fix $\phi\in \g(L^{2}(\T); \FL^{s,p}(\T))$. 
Given  $u_{0}\in \FL^{s,p}(\T)$,
define the operator $\G_{u_0}$ by 
\begin{align*}
\G_{u_0} u(t) := S(t) u_0 + i \int_0^t S(t - t') \N(u)(t') dt'  -i \Psi.
\end{align*}

Let  $b=\frac{1}{p'}-\dl$ for some $\dl>0$ sufficiently small
be given by  Proposition \ref{PROP:nonlin}. 
Then, by Lemma \ref{LEM:lin} with $b'=-\frac{1}{p}+\dl$ and  Proposition \ref{PROP:nonlin}, we have
\begin{align}
\|\G_{u_{0}}(u)\|_{X^{s,b}_{p;T}}  
& \leq C_1\|u_{0}\|_{\FL^{s,p}}+C_2T^{2\delta}\|u\|_{X^{s,b}_{p;T}}^{3} + \|\Psi\|_{X^{s,b}_{p;T}}
\label{X2}
\end{align}

\noi
for $0 < T \leq 1$.
Similarly, we have 
\begin{equation}
\|\G_{u_{0}}(u)-\G_{u_{0}}(v)\|_{X^{s,b}_{p;T}} 
\leq C_3T^{2\dl}
\Big(\|u\|_{X^{s,b}_{p;T}}^{2}+\|v\|_{X^{s,b}_{p;T}}^{2} \Big)\|u-v\|_{X^{s,b}_{p;T}}.
\label{X3}
\end{equation}

\noi
It follows from 
 Lemma~\ref{LEM:stoconv} that there exists a set $\Si \subset \O$
 with $P(\Si) = 1$ such that $ \Psi = \Psi^\o \in X^{s,b}_{p}([0, 1])$ for each $\o \in \Si$.
Now, choose\footnote{Here, we used the $X^{s,b}_{p}$-norm of $\Psi^\o$ on the interval $[0, 1]$
so that it does not depend on $T$, since it is used to determine $R^\o$, 
which in turn determines $T^\o$.}
$R^\o :=2C_1\|u_{0}\|_{\FL^{s,p}}+2\|\Psi^\o\|_{X^{s,b}_{p}([0, 1])}$
and positive $T = T^\o= T(R^\o)\ll 1$ such that 
\[C_2T^{2\dl}R^2\leq \frac{1}{2} \qquad \text{and} \qquad C_3T^{2\dl}R^2\leq \frac{1}{4},\]

\noi
Then, it follows from \eqref{X2} and \eqref{X3}
that  $\G_{u_{0}}$ is a  contraction on the closed ball $B_{R}\subset X^{s,b}_{p}([0, T])$ 
of radius $R$
and thus has  a unique fixed point $u=\G_{u_{0}}(u)\in X^{s,b}_{p}([0, T])$ for any $\o\in \Si$.

Our choice of $b = \frac{1}{p'} - \dl$ does not allow us to conclude
the continuity of $u$ in time in a direct manner.
From Lemma \ref{LEM:lin} (i) with \eqref{conti}, we see that the linear solution $S(t)u_0$
belongs to $ C([0,T];\FL^{s,p}(\T))$. 
From Lemma \ref{LEM:stoconv2}, we also have 
$\Psi\in C([0,T];\FL^{s,p}(\T))$ almost surely.
Finally, by applying Lemma \ref{LEM:lin} (ii) with  $b'=-\frac{1}{p}+2\delta>-\frac{1}{p}$
and Proposition \ref{PROP:nonlin}, we obtain
\[ \bigg\| \int_0^t S(t - t')\N(u)(t') dt'\bigg\|_{X^{s,b+2\dl}_{p;T}} 
\les T^{\delta} \| \N(u)\|_{X^{s,-\frac{1}{p}+2\dl}_{p;T}} 
\les \|u\|_{X^{s,b}_{p;T}}^{3} <\infty.\]

\noi
Since $b +2\dl = \frac{1}{p'} + \dl > \frac{1}{p'}$, 
we conclude from \eqref{conti} that the nonlinear part $u - S(t) u_0 +i \Psi$ is also continuous in time
with values in $\FL^{s, p}(\T)$.
Putting all together, we conclude that 
$u \in C([0,T];\FL^{s,p}(\T))$ almost surely.
This completes the proof of Theorem \ref{THM:1}.

\appendix

\section{On the $\g$-radonifying operators}\label{SEC:gamma}

In this appendix, we go over the basic definitions and properties
of $\g$-radonifying operators. 
These operators appear as  natural extensions of Hilbert-Schmidt operators
 to a Banach space setting. 
 A practical example is the theory of stochastic integrations 
in the Banach space setting,  where $\g$-radonifying operators suitably generalize
  the role played by Hilbert-Schmidt operators in the more ordinary Hilbert space setting 
  \cite{Br2, VN3}. 
  The content of this section is mostly taken from  the textbook \cite[Chapter 9]{HVNVW2}.

Let $H$ be a separable Hilbert space and 
$B$ be a Banach space.
We  denote the space of bounded linear operators from $H$ into $B$ by $\L(H;B)$. 
An operator $T\in \L(H;B)$ is said to be of finite rank if it can be represented as 
\[ T(\cdot)=\sum_{n=1}^{N}\jb{\,\cdot, e_{n}}x_{n},\]

\noi
where $\jb{\cdot, \cdot}$ denotes the inner product in $H$, $\{e_{n}\}_{n=1}^{N}$ is orthonormal in $H$, and 
$\{ x_{n}\}_{n=1}^{N}\subset B$. 

\begin{definition} \rm
We define the space $\g(H;B)$ as the closure of the set of finite rank operators in $\L(H;B)$ under the norm: 
\begin{equation}\label{g1}
\|T\|_{\g(H;B)}:=\sup_{N \in \NB} \Bigg( \E \bigg[ \bigg\| \sum_{n=1}^{N} g_{n}
T(e_n) \bigg\|_{B}^{2} \bigg] \Bigg)^{\frac{1}{2}}, 
\end{equation}

\noi
 where $\{e_{n}\}_{n\in \NB}$ is an orthonormal basis for $H$ and 
 $\{g_{n}\}_{n\in \NB}$ is  a sequence of independent standard complex-valued Gaussian random variables. 
 An operator  $T\in \g(H;B)$ is called a $\g$-radonifying operator.
\end{definition}
 
The definition \eqref{g1} is  independent of a choice of orthonormal basis $\{e_{n}\}_{n\in \NB}$ for $H$. 
In the following, it is understood that the results are independent of this choice. 
Note that  $(\g(H;B),\|\cdot\|_{\g(H;B)})$ is a Banach space and is separable whenever $B$ is separable.

\begin{remark}\rm 
The $L^{2}(\O)$-norm appearing in \eqref{g1} can be replaced with $L^{p}(\O)$ for any $1\leq p<\infty$. 
Strictly speaking,  this creates a family of spaces $\g_{p}(H;B)$, however,  
by the Kahane-Khintchine inequality (namely, the Banach-valued extension
of Lemma \ref{LEM:gauss}), all these norms are equivalent. 
Hence, it suffices to consider the most natural choice  $p=2$ and set $\g(H;B) : = \g_{2}(H;B)$.
\end{remark}

The property of being $\g$-radonifying is stable under transformations 
by bounded linear operators in either direction.

\begin{lemma}[Ideal property; Theorem 9.1.10 in \cite{HVNVW2}]\label{LEM:ideal} Let $H$ and $H'$ be Hilbert spaces and $B$ and $B'$ be Banach spaces. Let $U\in \L(H';H)$, $S\in \L(B;B')$,  and 
$T\in \g(H;B)$. 
Then,  $STU\in \g(H';B')$ and 
\[\|STU\|_{\g(H';B')}\leq \|S\| \|T\|_{\g(H;B)} \|U\|.\]
\end{lemma}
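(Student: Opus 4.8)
The plan is to reduce the claim to the two one-sided estimates $\|ST\|_{\g(H;B')}\le \|S\|\,\|T\|_{\g(H;B)}$ and $\|TU\|_{\g(H';B)}\le \|U\|\,\|T\|_{\g(H;B)}$, and then to compose them. First I would reduce to the case of finite-rank $T$: since $\g(H;B)$ is by definition the closure of the finite-rank operators and the $\g$-norm dominates the $\L(H;B)$-operator norm, it is enough to prove the bound for finite-rank $T$ and apply it to the differences $T_k-T_j$ of a finite-rank sequence approximating a general $T\in\g(H;B)$; this shows that $(ST_kU)$ is $\g$-Cauchy, so its limit lies in $\g(H';B')$ and, by operator-norm convergence, equals $STU$, with the claimed bound passing to the limit. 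The left factor $S$ is then immediate: pulling $S$ out of the norm gives $\big\|\sum_{n} g_n STe_n\big\|_{B'}=\big\|S\big(\sum_n g_n Te_n\big)\big\|_{B'}\le \|S\|\,\big\|\sum_n g_n Te_n\big\|_B$, so taking $L^2(\O)$-norms and the supremum over $N$ yields $\|ST\|_{\g(H;B')}\le\|S\|\,\|T\|_{\g(H;B)}$. This is why I would peel off $S$ first and spend the real effort on the right factor $U$.

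For the right factor I would use homogeneity to assume $\|U\|\le 1$. Writing a finite-rank $T$ as $T=\sum_{j=1}^{J}\jb{\,\cdot,v_j}x_j$ with $\{v_j\}$ orthonormal in $H$ and $x_j\in B$, and completing $\{v_j\}$ to an orthonormal basis of $H$, one reads off $\|T\|_{\g(H;B)}^2=\E\big\|\sum_{j=1}^{J} g_j x_j\big\|_B^2$. Given a finite orthonormal system $\{f_m\}_{m=1}^{M}$ in $H'$, a direct computation gives $\sum_{m=1}^{M} g_m\,TUf_m=\sum_{j=1}^{J}\tilde h_j x_j$, where $\tilde h_j:=\sum_{m=1}^{M}\jb{Uf_m,v_j}\, g_m$ is a finite-dimensional centered (circularly symmetric) complex Gaussian vector. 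Its covariance is $BB^{*}$ with $B=(\jb{Uf_m,v_j})_{j,m}$, and since $B$ represents the compression of $U$ to $\mathrm{span}\{f_m\}$ followed by the orthogonal projection onto $\mathrm{span}\{v_j\}$, the bound $\|U\|\le 1$ forces $\|B\|\le 1$, that is, $BB^{*}\le I_J$ as Hermitian matrices.

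The crux, which I expect to be the main obstacle, is a Gaussian covariance-domination step. Using $BB^{*}\le I_J$, I would couple $(\tilde h_j)$ with a standard complex Gaussian: choosing $(h'_j)$ an independent centered complex Gaussian vector of covariance $I_J-BB^{*}\ge 0$, the sum $(\tilde h_j+h'_j)$ is standard complex Gaussian and hence equal in law to $(g_j)$. Conditioning on $(\tilde h_j)$ and invoking Jensen's inequality for the convex function $\|\cdot\|_B^2$ gives $\E\big\|\sum_j \tilde h_j x_j\big\|_B^2\le \E\big\|\sum_j(\tilde h_j+h'_j)x_j\big\|_B^2=\E\big\|\sum_j g_j x_j\big\|_B^2=\|T\|_{\g(H;B)}^2$. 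Taking the supremum over $M$ (equivalently, over an orthonormal basis of $H'$) then yields $\|TU\|_{\g(H';B)}\le\|T\|_{\g(H;B)}$, and undoing the normalization restores the factor $\|U\|$. The delicate point throughout is to work with circularly symmetric complex Gaussians, so that the pseudo-covariances vanish and the coupling $(\tilde h_j+h'_j)\overset{d}{=}(g_j)$ is legitimate; once this comparison is in place, composing the two one-sided bounds gives $\|STU\|_{\g(H';B')}\le\|S\|\,\|TU\|_{\g(H';B)}\le\|S\|\,\|U\|\,\|T\|_{\g(H;B)}$, as desired.
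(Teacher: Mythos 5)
Your proof is correct. Note first that the paper does not actually prove this lemma: it is quoted verbatim from the textbook of Hyt\"onen--van Neerven--Veraar--Weis \cite[Theorem 9.1.10]{HVNVW2}, so there is no internal proof to compare against. Your argument is a sound, self-contained reconstruction of the standard one. The decomposition into the two one-sided bounds is the usual route; the left factor $S$ is indeed trivial by pulling $S$ out of $\big\|\sum_n g_n S T e_n\big\|_{B'}$, and the right factor $U$ is the real content. Your coupling step is a direct finite-dimensional proof of what the textbook isolates as a separate ``Gaussian covariance domination'' theorem and then applies: since $\tilde h = (\jb{Uf_m,v_j})_{j,m}\,g$ is circularly symmetric with covariance $BB^{*}\leq I_J$, adding an independent circularly symmetric Gaussian with covariance $I_J-BB^{*}$ and using conditional Jensen for $\|\cdot\|_B^2$ gives exactly $\E\big\|\sum_j \tilde h_j x_j\big\|_B^2\leq \E\big\|\sum_j g_j x_j\big\|_B^2$; the circular symmetry (vanishing pseudo-covariance) is indeed the point that makes the coupling legitimate, and you flag it correctly. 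Two minor points you assert without proof but which are easy and worth a line each: (i) the $\g$-norm dominates the operator norm (needed to identify the $\g$-limit of $ST_kU$ with $STU$) follows from the monotonicity in $N$ of $\E\big\|\sum_{n\leq N} g_n Te_n\big\|_B^2$, itself a consequence of Jensen applied to the independent centered increments, which yields $\|Te\|_B\leq\|T\|_{\g(H;B)}$ for unit vectors $e$; and (ii) the identity $\|T\|_{\g(H;B)}^2=\E\big\|\sum_{j=1}^J g_j x_j\big\|_B^2$ for finite-rank $T$ uses the (stated-in-the-paper) independence of the $\g$-norm from the choice of orthonormal basis, so that one may take a basis whose first $J$ elements are $v_1,\dots,v_J$. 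Neither is a gap in substance.
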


A very useful characterization of $\g$-radonifying operators is the following. 

\begin{proposition}[Theorem 9.1.17 in \cite{HVNVW2}] 
An operator $T\in \L(H;B)$ is $\g$-radonifying if and only if
  the sum $\sum_{n\in \NB}g_{n}T(e_{n})$ converges in $L^{p}(\O; B)$
  for some $1\leq p<\infty$  and  some orthonormal basis $\{ e_{n}\}_{n\in \NB}$ for $H$.
If $T\in \g(H;B)$, 
then this sum also converges almost surely and defines a $B$-valued Gaussian random variable with covariance operator $TT^{*}$. Furthermore,  we have 
\begin{equation}
\|T\|_{\g(H;B)}=\Bigg(\E \Bigg[\bigg\| \sum_{n\in \NB}g_{n}T(e_{n}) \bigg\|^{2}_{B} \Bigg] \Bigg)^{\frac{1}{2}}
\label{g2}
\end{equation}

\noi
for any orthonormal basis $\{ e_{n}\}_{n\in \NB}$ for $H$.
\end{proposition}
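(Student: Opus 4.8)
The plan is to establish the three assertions---the convergence characterization, the identification of the limit as a $B$-valued Gaussian variable with covariance $TT^*$, and the basis-independent norm formula \eqref{g2}---by exploiting the definition of $\g(H;B)$ as the \emph{closure} of the finite-rank operators, together with the rotation invariance of Gaussian vectors and the equivalence of Gaussian moments (the Kahane--Khintchine inequality recorded in the Remark above, whose scalar version is Lemma \ref{LEM:gauss}). Throughout I write $S_N := \sum_{n=1}^{N} g_n T(e_n)$ and let $P_N$ be the orthogonal projection of $H$ onto $\mathrm{span}\{e_1,\dots,e_N\}$, so that $S_N = \sum_{n=1}^{N} g_n (TP_N)(e_n)$ and each $T_N := TP_N$ is finite-rank.

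For the forward implication, I would first note that when $T$ is finite-rank, $S_N$ is eventually a fixed finite Gaussian sum and hence converges in $L^2(\O;B)$ trivially. For general $T \in \g(H;B)$, I would choose finite-rank $R_k \to T$ in $\g(H;B)$ and use that, directly from the definition of the $\g$-norm,
\[
\Big\| \sum_{n=1}^{N} g_n (T-R_k)(e_n)\Big\|_{L^2(\O;B)} \le \|T-R_k\|_{\g(H;B)}
\]
uniformly in $N$. Combined with the convergence of the finite-rank sums, an $\eps/3$ argument shows that $(S_N)$ is Cauchy in $L^2(\O;B)$, hence convergent to some $\xi$. The Kahane--Khintchine inequality applied to the tail sums upgrades this to convergence in $L^q(\O;B)$ for every $1\le q<\infty$, and the It\^o--Nisio theorem promotes the resulting convergence in probability to almost sure convergence. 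Since $\xi$ is an $L^2$-limit of $B$-valued Gaussian vectors, it is Gaussian, and a direct computation of $\E\big[\jb{\xi,b^*}\cj{\jb{\xi,c^*}}\big]$ for $b^*,c^*\in B^*$, using $\E[g_n\cj{g_m}]=\dl_{nm}$, identifies the covariance operator with $TT^*$.

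For the converse, suppose $\sum_n g_n T(e_n)$ converges in $L^p(\O;B)$ for some $p$ and some basis. Then the finite-rank operators $T_N$ satisfy $\|T_N-T_M\|_{\g(H;B)} = \big\|\sum_{n=M+1}^{N} g_n T(e_n)\big\|_{L^2(\O;B)}$, which by Kahane--Khintchine is Cauchy; since $\g(H;B)$ is complete, $(T_N)$ converges there, and comparing pointwise limits (using $\|\cdot\|_{\L(H;B)}\le\|\cdot\|_{\g(H;B)}$ together with $T_N h \to Th$) identifies the limit as $T$, so $T\in\g(H;B)$. Finally, once $L^2$-convergence is known, the submartingale property of $N\mapsto \E\|S_N\|_B^2$ gives $\|T\|_{\g(H;B)}^2=\sup_N \E\|S_N\|^2=\E\|\xi\|^2$, which is \eqref{g2}; its independence of the basis follows from rotation invariance, since for two orthonormal bases related by a unitary $U$ on $H$ the Gaussian coefficients are transformed into the identically distributed family $\tilde g_m=\sum_n U_{mn}g_n$.

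I expect the main obstacle to be the convergence step in the forward direction: in a general Banach space, finiteness of $\sup_N \E\|S_N\|^2$ does \emph{not} by itself force $(S_N)$ to converge (this is exactly the gap between $\g$-summing and $\g$-radonifying operators, which coincide only when $B$ does not contain a copy of $c_0$). It is therefore essential to use the closure definition of $\g(H;B)$ rather than mere finiteness of the norm, and the rotation-invariance argument for basis-independence must be carried out at the level of finite truncations before passing to the limit.
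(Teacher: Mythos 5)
The paper itself offers no proof of this proposition: it is quoted directly as Theorem 9.1.17 of \cite{HVNVW2}, and the appendix explicitly defers to that textbook. Your proposal can therefore only be measured against the standard argument, and it is essentially that argument, correctly assembled: exploit the closure definition \eqref{g1} by approximating with finite-rank operators, upgrade $L^2(\O;B)$-convergence to $L^q(\O;B)$-convergence via Kahane--Khintchine and to almost sure convergence via It\^o--Nisio, identify the Gaussian limit and its covariance $TT^*$ by testing against functionals in $B^*$, and deduce \eqref{g2} from the monotonicity of $N \mapsto \E\|S_N\|_B^2$. You also correctly isolate the one point where a naive argument fails: finiteness of the supremum in \eqref{g1} alone (the $\g$-summing property) does not force convergence of $S_N$ in a general Banach space, so the closure definition is indispensable. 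This is exactly the right structural observation.

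Two steps are stated imprecisely, though both are fixable in a line each. First, for a finite-rank operator $R(\cdot)=\sum_{j=1}^{K}\jb{\cdot, f_j}x_j$, the partial sums $\sum_{n=1}^{N}g_n R(e_n)$ are \emph{not} eventually constant unless the vectors $f_j$ lie in the span of finitely many of the chosen basis vectors $e_n$; the correct justification is that
\[
\sum_{n\in\NB} g_n R(e_n)=\sum_{j=1}^{K}\bigg(\sum_{n\in\NB}\jb{e_n, f_j}\,g_n\bigg)x_j
\]
converges in $L^2(\O;B)$ because each scalar Gaussian series has square-summable coefficients $\jb{e_n,f_j}$. Second, your $\eps/3$ argument needs the bound for \emph{tail} sums $\sum_{n=M+1}^{N}g_n(T-R_k)(e_n)$, whereas the displayed inequality controls only initial segments; the tail version follows from the initial-segment bound combined with the Jensen/independence inequality $\E\|\zeta_1\|_B^2\leq \E\|\zeta_1+\zeta_2\|_B^2$ for independent mean-zero $\zeta_2$ --- the same fact you later invoke as the ``submartingale property'' --- but it should be cited at the point of use, since it is also what makes the $\g$-norm of $T(P_N-P_M)$ in your converse direction equal to the $L^2(\O;B)$-norm of the corresponding tail sum.
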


Recall that $T\in \L(H;H')$ is Hilbert-Schmidt from $H$ to Hilbert $H'$ if 
\begin{align*}
\|T\|_{HS(H;H')}^{2}:=\sum_{n\in \NB}\|T(e_{n})\|_{H'}^{2}<\infty
\end{align*}

\noi
for some (and hence any) orthonormal basis $\{ e_{n}\}_{n\in \NB}$ for $H$. 
It is then clear from \eqref{g2} that  when $B$ is a Hilbert space, 
then we have  $\g(H;B)=HS(H;B)$ and $\|T\|_{\g(H;B)}=\|T\|_{HS(H;B)}.$

Note that when $B$ is a Hilbert space, 
we can express the $\g(H;B)$-norm  {\it without} the use of probability. 
When $B$ is a Lebesgue space, 
we can also characterize the $\g(H;B)$-norm without the use of probability.

\begin{proposition}[Proposition 9.3.1 and (9.21) on p.\,285 in \cite{HVNVW2}]\label{PROP:g3} Let $(X,\mathcal{M},\mu)$ be a $\s$-finite measure space, $1\leq p<\infty$,  and  $T\in \L(H; L^p(X))$. 
Then,  $T\in \g(H;L^{p}(X))$ if and only if
the function $\big(\sum_{n\in \NB}|T(e_{n})|^{2} \big)^{\frac{1}{2}}$ lies in $L^{p}(X)$
 for some orthonormal basis $\{ e_{n}\}_{n\in \NB}$ for $H$.
 In this case, we have 
\begin{equation*}
\|T\|_{\g(H;L^{p}(X))}\sim  \bigg\|\bigg(\sum_{n\in \NB}|T(e_{n})|^{2} \bigg)^{\frac{1}{2}} \bigg\|_{L^{p}(X)}.
\end{equation*}
\end{proposition}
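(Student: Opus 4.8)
The plan is to compute the $\g(H;L^{p}(X))$-norm by interchanging the Gaussian expectation with the spatial integral and reducing matters to a pointwise scalar Gaussian moment, the key point being that the target space $L^{p}(X)$ carries the \emph{same} integrability exponent $p$ that one is free to use in the probabilistic norm. Concretely, first I would invoke the Kahane--Khintchine inequality (recorded in the remark following the definition of $\g(H;B)$) to replace the $L^{2}(\O)$-norm in \eqref{g1} by the $L^{p}(\O)$-norm with this same exponent $p$, at the cost of constants depending only on $p$. It then suffices to control, uniformly in $N\in\NB$, the quantity $\big(\E\|\sum_{n=1}^{N} g_{n} T(e_{n})\|_{L^{p}(X)}^{p}\big)^{1/p}$.

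Next, for fixed $N$, I would apply the Fubini--Tonelli theorem to the nonnegative integrand over $\O\times X$ to write
\begin{align*}
\E\bigg\|\sum_{n=1}^{N} g_{n} T(e_{n})\bigg\|_{L^{p}(X)}^{p}
= \int_{X} \E\bigg|\sum_{n=1}^{N} g_{n} T(e_{n})(x)\bigg|^{p} \, d\mu(x).
\end{align*}
For each fixed $x$, the inner sum is a mean-zero complex-valued Gaussian with variance $\sum_{n=1}^{N} |T(e_{n})(x)|^{2}$, so the equivalence of Gaussian moments underlying Lemma \ref{LEM:gauss} gives
\begin{align*}
\E\bigg|\sum_{n=1}^{N} g_{n} T(e_{n})(x)\bigg|^{p} \sim_{p} \bigg(\sum_{n=1}^{N} |T(e_{n})(x)|^{2}\bigg)^{\frac{p}{2}}.
\end{align*}
Integrating in $x$ and taking $p$-th roots yields, uniformly in $N$,
\begin{align*}
\bigg(\E\bigg\|\sum_{n=1}^{N} g_{n} T(e_{n})\bigg\|_{L^{p}(X)}^{p}\bigg)^{\frac{1}{p}}
\sim_{p} \bigg\|\bigg(\sum_{n=1}^{N} |T(e_{n})|^{2}\bigg)^{\frac{1}{2}}\bigg\|_{L^{p}(X)}.
\end{align*}

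To finish, I would let $N\to\infty$. The partial sums $\sum_{n=1}^{N}|T(e_{n})(x)|^{2}$ increase pointwise, so by the monotone convergence theorem the right-hand side converges to $\|(\sum_{n}|T(e_{n})|^{2})^{1/2}\|_{L^{p}(X)}$. If this limit is finite, the uniform bound on the left-hand side together with the convergence criterion for $\g$-radonifying operators (Theorem 9.1.17, quoted above) shows that $\sum_{n} g_{n} T(e_{n})$ converges in $L^{p}(\O;L^{p}(X))$, whence $T\in\g(H;L^{p}(X))$; conversely, $T\in\g(H;L^{p}(X))$ forces the monotone limit to be finite. Passing to the limit in the last displayed equivalence then produces the asserted norm equivalence. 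Finally, the basis-independence of the $\g$-norm recorded after the definition shows that the right-hand side is independent of the chosen orthonormal basis, which upgrades ``for some'' to ``for any'' orthonormal basis in the statement.

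The step I expect to require the most care is precisely this passage from the finite-rank truncations to the full operator. For each fixed $N$ the interchange of $\E$ and $\int_{X}$ is unconditionally legitimate, since the integrand is a finite sum of genuine $L^{p}$-functions; the subtlety is to reconcile the supremum-over-$N$ definition \eqref{g1} with the limiting expression, i.e.\ to know that uniform boundedness of the Gaussian partial sums is equivalent to their convergence. This is exactly what the $\g$-radonifying characterization via convergence of $\sum_{n} g_{n} T(e_{n})$ supplies, so once it is invoked no additional estimates are needed.
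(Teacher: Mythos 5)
The paper itself does not prove this proposition: it is quoted from \cite{HVNVW2} (Proposition 9.3.1 and the identity (9.21) there), so there is no internal argument to compare against. Your strategy — Kahane--Khintchine to pass to $p$-th moments, Fubini--Tonelli over $\O\times X$, the exact scalar identity $\E\big|\sum_n a_n g_n\big|^p = c_p\big(\sum_n|a_n|^2\big)^{p/2}$ for complex Gaussians, then monotone convergence in $N$ — is precisely the standard proof of this square-function characterization, and all of those steps, as well as the converse direction and the basis-independence remark, are correct.

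There is, however, one genuine logical gap, in the passage from the truncations to $T$ itself. You claim that the uniform bound on $\big(\E\|\sum_{n=1}^N g_n T(e_n)\|_{L^p(X)}^p\big)^{1/p}$ together with Theorem 9.1.17 yields convergence of $\sum_n g_n T(e_n)$ in $L^p(\O;L^p(X))$, and you later assert that this characterization ``supplies'' the equivalence of uniform boundedness and convergence. It does not: Theorem 9.1.17 takes convergence of the series as its hypothesis. In a general Banach space, boundedness of the Gaussian partial sums (the $\g$-summing condition) is strictly weaker than membership in $\g(H;B)$ --- $B=c_0$ is the standard counterexample --- and the fact that the two notions coincide for spaces not containing $c_0$ is a substantially deeper theorem (Hoffmann-J{\o}rgensen--Kwapie\'n) than anything quoted in the paper. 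Fortunately you do not need it: apply your moment identity to increments rather than to partial sums. For $M<N$,
\begin{align*}
\E\bigg\|\sum_{n=M+1}^{N} g_n T(e_n)\bigg\|_{L^p(X)}^p
= c_p \int_X \bigg(\sum_{n=M+1}^{N}|T(e_n)(x)|^2\bigg)^{\frac{p}{2}}\,d\mu(x),
\end{align*}
and when $\big(\sum_{n}|T(e_n)|^2\big)^{1/2}\in L^p(X)$ the right-hand side tends to $0$ as $M,N\to\infty$ by dominated convergence: the tails vanish $\mu$-a.e.\ (the full square sum is finite a.e.) and are dominated by the integrable function $\big(\sum_{n}|T(e_n)|^2\big)^{p/2}$. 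Hence the partial sums are Cauchy in $L^p(\O;L^p(X))$, they converge, and only then does Theorem 9.1.17 apply. Equivalently, the same computation shows $\|T-T_N\|_{\g(H;L^p(X))}\to 0$ for the finite-rank truncations $T_N=\sum_{n\leq N}\jb{\,\cdot,e_n}T(e_n)$, which verifies directly that $T$ lies in the closure of the finite-rank operators, as the definition \eqref{g1} of $\g(H;B)$ requires. With this one-line repair your proof is complete.
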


In Section \ref{SEC:intro}, 
we defined 
the  $\g(L^{2}(\T); \FL^{s,p}(\T))$-norm by \eqref{G1} 
for $s\in \R$ and $1 \leq p < \infty$.
Let us see how this definition \eqref{G1} appears
from the theory discussed above.
Let $\phi \in \L(L^{2}(\T); \FL^{s,p}(\T))$. 
Then,  from the definition \eqref{FL1} of the Fourier-Lebesgue space $\FL^{s, p}(\T)$, 
we have $\jb{\nabla}^{s}\phi \in \L(L^{2}(\T); \FL^{0,p}(\T))$ 
and $\F\jb{\nabla}^{s}\phi \in \L(L^{2}(\T); \l^{p}(\Z))$. 
From the ideal property (Lemma \ref{LEM:ideal}) and the invertibility of $\jb{\nabla}^{s}$ and $\F$, 
we see that $\phi\in \g(L^{2}(\T); \FL^{s,p}(\T))$ if and only if 
$\F\jb{\nabla}^{s}\phi\in \g(L^{2}(\T); \l^{p}(\Z))$. 
Furthermore, 
from Proposition~\ref{PROP:g3}, we have
\begin{align*}
\|\phi\|_{ \g(L^{2}(\T); \FL^{s,p}(\T))}
=\| \F\jb{\nabla}^{s}\phi\|_{\g(L^{2}(\T); \l^{p}(\Z))}
\sim \bigg\|\bigg(\sum_{k\in \Z}|\mathcal{F}(\jb{\nabla}^{s}\phi(e_{k}))(n)|^{2} \bigg)^{\frac{1}{2}} \bigg\|_{\l^{p}_{n}(\Z)},
\end{align*}

\noi
where $e_k (x) = e^{2\pi i k x}$, $k \in \Z$.
This justifies the definition  \eqref{G1}.

\begin{ack}\rm 

J.\,F.~was supported by the Maxwell Institute Graduate School in Analysis and its Applications, a Centre for Doctoral Training funded by the UK Engineering and Physical Sciences Research Council (grant EP/L016508/01), the Scottish Funding Council, Heriot-Watt University and the University of Edinburgh.
T.\,O.~was supported by the European Research Council (grant no.~637995 ``ProbDynDispEq'').

\end{ack}

\end{document}